\documentclass[11pt,reqno]{amsart}
\usepackage{amssymb,mathrsfs,graphicx,subfigure, enumerate}
\usepackage{amsmath,amsfonts,amssymb,amscd,amsthm,bbm}
\usepackage{extpfeil}
\usepackage{graphicx,colortbl}
\usepackage{float}
\usepackage{epsfig}
\usepackage{caption}
\usepackage{bm}
\graphicspath{{Figure/}}
\usepackage{kotex}
\usepackage[margin=1in]{geometry}

\makeatletter
\@namedef{subjclassname@2020}{\textup{2020} Mathematics Subject Classification}
\makeatother

\title[Spherically symmetric Navier-Stokes-Korteweg equations]{Stationary solutions to the spherically symmetric compressible fluid with capillarity effect}

\author[Kim]{Jeongho Kim}
\address[Jeongho Kim]{\newline Department of Applied Mathematics, \newline Kyung Hee University, 1732, Deogyeong-daero, Giheung-gu, Yongin-si, Gyeonggi-do 17104, Republic of Korea}
\email{jeonghokim@khu.ac.kr}

\begin{document}
\newtheorem{theorem}{Theorem}[section]
\newtheorem{lemma}{Lemma}[section]
\newtheorem{corollary}{Corollary}[section]
\newtheorem{proposition}{Proposition}[section]
\newtheorem{remark}{Remark}[section]
\newtheorem{definition}{Definition}[section]
\renewcommand{\theequation}{\thesection.\arabic{equation}}
\renewcommand{\thetheorem}{\thesection.\arabic{theorem}}
\renewcommand{\thelemma}{\thesection.\arabic{lemma}}
\newcommand{\bbr}{\mathbb R}
\newcommand{\R}{\mathbb{R}}
\newcommand{\e}{\varepsilon}
\newcommand{\pa}{\partial}
\newcommand{\tU}{\widetilde{U}}
\newcommand{\tu}{\widetilde{u}}
\newcommand{\tv}{\widetilde{v}}
\newcommand{\tw}{\widetilde{w}}
\newcommand{\pv}{p(v)}
\newcommand{\tp}{\widetilde{p}}
\newcommand{\tpv}{p(\widetilde{v})}
\newcommand{\norm}[1]{\left\lVert#1\right\rVert}
\newcommand{\tuSX}{(\widetilde{u}^S)^X}
\newcommand{\tvSX}{(\widetilde{v}^S)^X}
\newcommand{\twSX}{(\widetilde{w}^S)^X}
\newcommand{\tUSX}{(\widetilde{U}^S)^X}
\newcommand{\tm}{\widetilde{\mu(v)}}
\newcommand{\tk}{\widetilde{\kappa(v)}}

\subjclass[2020]{35Q35, 76N06} 

\keywords{exterior domain; Navier-Stokes-Korteweg equations; stationary solution; spherically symmetric equations; vanishing capillarity limit}

\thanks{}

\begin{abstract}
	We consider the spherically symmetric Navier--Stokes--Korteweg (NSK) system on the exterior domain $\Omega=\{x\in\R^n~|~|x|>1\}$ with $n\ge2$ when the boundary and far-field data are given. We show that, if the boundary data are sufficiently small, then there exists a unique smooth stationary solution to the spherically symmetric NSK system with impermeable wall, inflow, and outflow boundary conditions. We also establish the decay rate of the stationary solutions. Precisely, the stationary solution for the impermeable wall problem exponentially decays to the far-field states, while that of the inflow/outflow problem algebraically decays. Finally, we investigate the asymptotic convergences of the stationary solution for the impermeable wall problem as the capillarity coefficient vanishes. Numerical results validate that our theoretical convergence rate of the stationary solution is optimal.
\end{abstract}

\maketitle

\section{Introduction}\label{sec:1}
\setcounter{equation}{0}

In this paper, we consider a viscous compressible fluid with capillary effect on the domain $\Omega\subset\R^n$ with $n>1$, whose dynamics is governed by the so-called compressible Navier--Stokes--Korteweg (NSK) equations:
\begin{align}
\begin{aligned}\label{NSK}
&\rho_t +\nabla\cdot(\rho U) = 0,\quad t>0,\quad x\in\Omega,\\
&\rho(U_t +(U\cdot\nabla U))+\nabla P(\rho) = \nu\Delta U + (\nu+\lambda)\nabla(\nabla\cdot U)+\kappa\rho\nabla\Delta\rho,
\end{aligned}
\end{align}
where $\rho$ and $U$ are the density and velocity of the fluid, $\nu$ and $\lambda$ are viscosity coefficients satisfying $\nu>0$ and $2\nu+n\lambda\ge0$, and $\kappa>0$ is a capillarity coefficient. The pressure $P(\rho)$ is assumed to be polytropic form $P(\rho) = \rho^\gamma$ with the adiabatic constant $\gamma\ge1$. 

The equation of motion for the fluid with capillarity was devised by the early work of van der Waals \cite{W94} and Korteweg \cite{K01}, and the modern form of the equation \eqref{NSK} was rigorously derived by Dunn and Serrin \cite{DS85} by considering thermodynamics of interstitial work. After the NSK model was introduced, various topics such as well-posedness of local and global solutions \cite{CH13,DD01,GL16,H11,H17,HL96,J10,LV18}, stability of nonlinear waves \cite{C12,CHZ15,FLP26,FPZ23,HK26,HKKL25}, and vanishing viscosity and/or capillarity limit \cite{CH13,EKKpre,GL16} has been investigated in numerous literature. 

In this paper, we are interested in the spherically symmetric solution to \eqref{NSK} when the domain is given by an exterior domain $\Omega:=\{x\in\R^n~|~|x|>1\}$. Substituting the spherically symmetric functions $\rho$ and $U$ of the form
\[\rho(t,x) = \rho(t,r),\quad U(t,x) = \frac{x}{r}u(t,r),\quad r:=|x|\]
into \eqref{NSK}, we derive the equations for $(\rho,u)$ as
\begin{align}
\begin{aligned}\label{NSK_radial}
&\rho_t + \frac{(r^{n-1}\rho u)_r}{r^{n-1}}=0,\quad t>0,\quad r\in(1,\infty)\\
&\rho(u_t+uu_r)+P(\rho)_r = \mu\left(\frac{(r^{n-1}u)_r}{r^{n-1}}\right)_r+\kappa\rho \left(\rho_{rr}+\frac{n-1}{r}\rho_r\right)_r.
\end{aligned}
\end{align}
Here, $\mu :=2\nu+\lambda\ge0$ is a positive constant. We consider an initial-boundary value problem \eqref{NSK_radial} subject to the initial data $(\rho(0,r),u(0,r))=(\rho_0(r),u_0(r))$ with $\rho_0(r)>0$ and the far-field condition
\[\lim_{r\to\infty}(\rho(t,r),u(t,r))=(\rho_+,u_+),\quad t>0,\]
where $\rho_+>0$ and $u_+\in\R$ are constants. Finally, the boundary condition at $r=1$ is suitably given, depending on the sign of the velocity at the boundary:
\[\begin{cases}
	u(t,1)=u_-,\quad \rho_r(t,1)=\rho_b,\quad& t>0,\quad\mbox{if}\quad u_-\le0,\\
	(\rho,u)(t,1)=(\rho_-,u_-),\quad \rho_r(t,1)=\rho_b,\quad& t>0,\quad\mbox{if}\quad u_->0,
\end{cases}\]
where $\rho_->0$, $u_-\in\R$ and $\rho_b\in\R$ are constants. 
When $u_->0$, the initial-boundary value problem is called the {\it inflow problem}. When $u_-=0$, we call it as the {\it impermeable wall problem}, and if $u_-<0$, then it is called the {\it outflow problem}. Note that, for the inflow problem, since the characteristic curve at the boundary $r=1$ is inward to the domain, we need to prescribe the boundary value $\rho(t,1)=\rho_-$. For the other two cases, we cannot impose the boundary value of $\rho$, and it is determined by the solution.

When the capillarity effect vanishes, that is when $\kappa=0$ in \eqref{NSK_radial}, it reduces to the standard spherically symmetric Navier--Stokes (NS) equations, whose existence of the stationary solution and the time-asymptotic behavior toward it have been extensively studied. For the impermeable wall problem, Jiang \cite{J96} proved that there exists a unique global-in-time classical solution to the spherically symmetric NS equations, and also showed the time asymptotic stability for the case of $n=3$. Later, Nakamura, Nishibata, and Yanagi \cite{NNY04} extended this result by proving the time-asymptotic stability for the large initial data and the presence of external potential force. In the case of heat-conducting fluids, Nakamura and Nishibata \cite{NN08} proved the large-time behavior under the presence of the potential force.

On the other hand, when the velocity at the boundary is non-zero, the existence and stationary solution and the time-asymptotic behavior toward it requires more delicate analysis. In \cite{HM21}, Hashimoto and Matsumura proved an existence of stationary solutions for both inflow and outflow problems for spherically symmetric NS equations. The time-asymptotic stability for the outflow problem was investigated in \cite{HNS24} under the small initial perturbation, and the case of large initial data was resolved in \cite{HNpre}. On the other hand, for the inflow problem, the time-asymptotic stability under the small initial perturbation was proved in \cite{HHNpre}. We also refer the readers to \cite{HMpre} for the existence of stationary solution to the inflow/outflow problems for the spherically symmetric Navier--Stokes--Fourier system and \cite{HM24} for the inviscid limit of the stationary solution to the NS equations.

However, to the extent of the author's knowledge, the spherically symmetric solution to the NSK equations has not been considered in the literature. Thus, the goal of the paper is to take a first step to extend these results on the NS equations to the compressible fluid with capillary effect. Precisely, our first aim is to show the existence of stationary solutions $(\tilde{\rho},\tilde{u})(r)$ to \eqref{NSK_radial}, which satisfies
\begin{align}
	\begin{aligned}\label{NSK_stationary}
	&\frac{1}{r^{n-1}}(r^{n-1}\tilde{\rho}\tilde{u})_r = 0,\quad r>1,\\
	&\tilde{\rho}\tilde{u}\tilde{u}_r +P(\tilde{\rho})_r=\mu\left(\frac{(r^{n-1}\tilde{u})_r}{r^{n-1}}\right)_r+\kappa\tilde{\rho}\left(\tilde{\rho}_{rr}+\frac{n-1}{r}\tilde{\rho}_r\right)_r,
	\end{aligned}
\end{align} 
subject to the far-field and boundary conditions:
\begin{equation}\label{boundary}
	\lim_{r\to\infty}(\tilde{\rho}(r),\tilde{u}(r))=(\rho_+,u_+),\quad\tilde{u}(1)=u_-,\quad\tilde{\rho}_r(1)=\rho_b.
\end{equation}
It follows from \eqref{NSK_stationary}$_1$ that
\begin{equation}\label{tildeu}
	r^{n-1}\tilde{\rho}(r)\tilde{u}(r) = \tilde{\rho}(1)u_-,\quad\mbox{that is}\quad \tilde{u}(r) = \frac{\tilde{\rho}(1)}{\tilde{\rho}(r)} u_- r^{1-n},\quad r\ge 1.
\end{equation}
Thus, the only possible value for the far-field velocity is $u_+=0$ for $n>1$. Furthermore, for the impermeable wall problem, we have $u_-=0$, which implies $\tilde{u}(r)=0$ for all $r\ge1$. Substituting $\tilde{u}\equiv0$ into \eqref{NSK_stationary}$_2$, we obtain the following nonlinear equation for $\tilde{\rho}$:
\begin{equation}\label{NSK_stationary_impermeable}
	P(\tilde{\rho})_r = \kappa\tilde{\rho}\left(\tilde{\rho}_{rr}+\frac{n-1}{r}\tilde{\rho}_r\right)_r,
\end{equation}
subject to the far-field and boundary conditions
\begin{equation}\label{boundary_impermeable}
	\tilde{\rho}_r(1) = \rho_b,\quad \lim_{r\to\infty}\tilde{\rho}(r) = \rho_+.
\end{equation}
The first result of the paper is an existence of the stationary solution for the impermeable wall problem.

\begin{theorem}[Impermeable wall problem]\label{thm:impermeable}
	Suppose $u_-=0$. Let $\sigma$ be any positive constant satisfying $0<\sigma<\sqrt{\frac{h'(\rho_+)}{\kappa}}$. Then, there exists a positive constant $\delta$ and $C=C(n,\kappa,\rho_+)$ such that the following holds. If $|\rho_b|<\delta$, there exists a unique smooth solution $\tilde{\rho}(r)$ to \eqref{NSK_stationary_impermeable}--\eqref{boundary_impermeable} satisfying
	\[|\tilde{\rho}(r)-\rho_+|\le C|\rho_b|e^{-\sigma r},\quad r\ge 1.\]
\end{theorem}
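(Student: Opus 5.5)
Divide \eqref{NSK_stationary_impermeable} by $\tilde\rho$ and introduce the enthalpy $h(\rho)$ with $h'(\rho)=P'(\rho)/\rho$; this is the $h$ in the threshold for $\sigma$. Then $h(\tilde\rho)_r=\kappa\bigl(\tilde\rho_{rr}+\frac{n-1}{r}\tilde\rho_r\bigr)_r$. Integrate from $r$ to $\infty$, using the far-field condition and the decay of derivatives that we will build into the solution space. This reduces the problem to the second-order radial equation
\[
\kappa\Bigl(\tilde\rho_{rr}+\frac{n-1}{r}\tilde\rho_r\Bigr)=h(\tilde\rho)-h(\rho_+),\qquad \tilde\rho_r(1)=\rho_b,\quad \tilde\rho(\infty)=\rho_+ .
\]
Set $\phi=\tilde\rho-\rho_+$ and $a:=h'(\rho_+)/\kappa>0$. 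The equation becomes
\[
\phi''+\frac{n-1}{r}\phi'-a\phi=N(\phi),\qquad N(\phi):=\frac1\kappa\bigl(h(\rho_++\phi)-h(\rho_+)-h'(\rho_+)\phi\bigr)=O(\phi^2),
\]
with $\phi'(1)=\rho_b$ and $\phi(\infty)=0$. Conversely, a smooth solution of this reduced problem decaying with its derivatives solves the original one, since differentiating recovers \eqref{NSK_stationary_impermeable}.

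\textbf{Linear problem.} The homogeneous operator $L\phi=\phi''+\frac{n-1}{r}\phi'-a\phi$ is a modified Bessel operator. Its decaying solution is $\psi(r)=r^{1-n/2}K_{n/2-1}(\sqrt a\,r)$, which is positive with $\psi'<0$ and $\psi(r)\sim c\,r^{(1-n)/2}e^{-\sqrt a r}$. Its growing solution is $\chi(r)=r^{1-n/2}I_{n/2-1}(\sqrt a r)$. Write $W=\psi\chi'-\psi'\chi$; by Abel's formula $W=c\,r^{1-n}$. The Neumann Green's function for $L$ on $(1,\infty)$ with decay at infinity is then built from $\psi$ and $\tilde\chi:=\chi-\frac{\chi'(1)}{\psi'(1)}\psi$, which satisfies $\tilde\chi'(1)=0$. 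With it, the solution of $L\phi=f$, $\phi'(1)=\rho_b$, $\phi(\infty)=0$ is
\[
\phi=\frac{\rho_b}{\psi'(1)}\psi+\int_1^\infty G(r,s)f(s)\,ds .
\]
The key linear estimate is that, for any $0<\sigma<\sqrt a$, in the weighted norm $\|\phi\|_\sigma:=\sup_{r\ge1}e^{\sigma r}(|\phi|+|\phi'|)$ we have
\[
\Bigl\|\int_1^\infty G(\cdot,s)f(s)\,ds\Bigr\|_\sigma\le C\sup_{s\ge1} e^{\sigma s}|f(s)|
\]
Indeed, $|G(r,s)|\lesssim e^{-\sqrt a|r-s|}$ up to polynomial factors, and the margin $\sqrt a-\sigma>0$ absorbs those factors. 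Also $\|\psi\|_\sigma<\infty$.

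\textbf{Nonlinear problem.} Let $T$ be the map defined by the representation formula with $f=N(\phi)$, and work on the ball $\{\|\phi\|_\sigma\le 2C_0|\rho_b|\}$. Since $|N(\phi)|\le C\phi^2\le C\|\phi\|_\sigma^2e^{-2\sigma r}$, and $N$ is locally Lipschitz with small constant, $T$ is a contraction on this ball once $|\rho_b|<\delta$. The same smallness keeps $\tilde\rho>\rho_+/2>0$. The fixed point gives $|\tilde\rho-\rho_+|\le C|\rho_b|e^{-\sigma r}$, and bootstrapping in the ODE gives smoothness and decay of all derivatives. Uniqueness holds within this small decaying class. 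If uniqueness is wanted among all solutions, I would add an energy argument: multiply the reduced equation by $r^{n-1}\phi$ and integrate, using the fact that a monotone $h$ makes the linearized operator coercive.

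\textbf{Main obstacle.} The most delicate step is the linear weighted estimate uniform in $r\ge1$. It requires the Bessel asymptotics of $\psi$ and $\chi$, together with care about the sign of $\psi'(1)\neq0$ so the Neumann condition is solvable. An alternative that avoids special functions is a comparison and energy argument for the weighted function $e^{\sigma r}\phi$, using that the drift $\frac{n-1}{r}$ is bounded; I would try the Green's function route first.
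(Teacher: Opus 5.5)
Your proposal is correct and follows the paper's proof essentially step for step. It uses the same reduction to the second-order equation for $\phi=\tilde{\rho}-\rho_+$, the same Bessel-function lifting $\frac{\rho_b}{\psi'(1)}\psi$ (identical to the paper's $\phi_b$), the same Neumann Green's function (your $\tilde\chi$ is a constant multiple of the paper's $\phi_+$), the bound $|G(r,s)|\lesssim e^{-\sqrt a\,|r-s|}(rs)^{-(n-1)/2}$, and a contraction in an $e^{\sigma r}$-weighted space on a ball of radius proportional to $|\rho_b|$. The differences are minor and in your favor: including $\phi'$ in the norm is harmless because the paper's bound on $\partial_r G$ covers it, and you correctly note that the contraction gives uniqueness only within the small decaying ball, which the paper leaves implicit.
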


\begin{remark}
	Unlike the standard NS equations case, the stationary solution for the impermeable wall problem to the NSK system is non-trivial. 
\end{remark}

Now, we consider the inflow or outflow problem with $u_-\neq0$. In this case, the velocity is not identically zero, but it is determined by $\tilde{\rho}$ as in \eqref{tildeu}. The second result of the paper is an existence of stationary solution $(\tilde{\rho},\tilde{u})$ to \eqref{NSK_stationary} for both inflow and outflow cases.

\begin{theorem}[Inflow and outflow problems]\label{thm:inflow_outflow}
Suppose $u_-\neq 0$. Then, there exists a positive constant $\delta$ and $C=C(n,\kappa,\rho_+)$ such that the following holds. If $|\rho_b|,|u_-|<\delta$, then there exists a unique smooth solution $(\tilde{\rho},\tilde{u})$ to \eqref{NSK_stationary}--\eqref{boundary} satisfying
	\[|\tilde{\rho}(r)-\rho_+|\le C(|\rho_b|+|u_-|^2)r^{-2(n-1)},\quad |\tilde{\rho}_r(r)|\le C(|\rho_b|+|u_-|^2)r^{-(2n-1)},\quad r\ge 1\]
	and
	\[C^{-1}|u_-|r^{-(n-1)}\le |\tilde{u}(r)|\le C|u_-|r^{-(n-1)},\quad r\ge1.\]
\end{theorem}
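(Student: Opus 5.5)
Dividing \eqref{NSK_stationary}$_2$ by $\tilde\rho$ and using \eqref{tildeu}, I will rewrite the momentum equation as a third-order ODE for $\tilde\rho$ alone. With $m:=\tilde\rho(1)u_-$ (an unknown constant, since $\tilde\rho(1)$ is not prescribed), we have $\tilde u=m r^{1-n}/\tilde\rho$. The convective term is $\tilde u\tilde u_r=(\tilde u^2/2)_r$, and $P(\tilde\rho)_r/\tilde\rho=h(\tilde\rho)_r$ with the enthalpy $h'(\rho)=P'(\rho)/\rho$. The viscous term is a total $r$-derivative divided by $\tilde\rho$. So the equation becomes
\[
\Big(\tfrac{m^2}{2}r^{2-2n}\tilde\rho^{-2}+h(\tilde\rho)-h(\rho_+)\Big)_r=\kappa\Big(\tilde\rho_{rr}+\tfrac{n-1}{r}\tilde\rho_r\Big)_r+\frac{\mu}{\tilde\rho}\Big(\frac{(r^{1-n}m\tilde\rho^{-1}\cdot r^{n-1})_r}{r^{n-1}}\Big)_r\cdot(\ldots).
\]
I will integrate this from $r$ to $\infty$. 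The viscous part is not an exact derivative after division by $\tilde\rho$, so I keep it as $\int_r^\infty \tilde\rho^{-1}(\cdot)_s\,ds$ and integrate by parts, which yields a boundary term plus $\int_r^\infty \tilde\rho^{-2}\tilde\rho_s(\cdot)\,ds$. This gives a second-order equation
\[
\kappa\Big(\phi''+\tfrac{n-1}{r}\phi'\Big)-h'(\rho_+)\phi=F[\phi,m](r),\qquad \phi:=\tilde\rho-\rho_+ .
\]
The source $F$ contains the quadratic remainder $h(\rho_++\phi)-h(\rho_+)-h'(\rho_+)\phi$, the forcing $\tfrac{m^2}{2}r^{2-2n}\tilde\rho^{-2}$, and viscous terms of size $|m|\,r^{-n}(|\phi|+|\phi'|)$ together with $m\,r^{-n}$-type contributions. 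The boundary conditions are $\phi'(1)=\rho_b$ and $\phi\to0$.

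Next I set up a contraction in a weighted space. The linear operator $L=\kappa(\partial_r^2+\tfrac{n-1}{r}\partial_r)-h'(\rho_+)$ has a decaying fundamental solution $\psi_-$, a modified Bessel function behaving like $r^{-(n-1)/2}e^{-\sqrt{h'(\rho_+)/\kappa}\,r}$, and a growing one $\psi_+$. The solution with $\phi'(1)=\rho_b$ decaying at infinity is given by variation of parameters: $\phi=\rho_b\psi_-/\psi_-'(1)+\int_1^\infty G(r,s)F(s)\,ds$, with a Green's function $G$ adapted to a Neumann condition at $1$ and decay at $\infty$. The homogeneous part decays exponentially, as in Theorem~\ref{thm:impermeable}. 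The key estimate is that $L^{-1}$ maps $r^{-\alpha}$-decaying data to $r^{-\alpha}$-decaying solutions, with derivative decaying like $r^{-\alpha-1}$. This holds because the kernel is localized to $|r-s|\lesssim1$ with exponential tails, so $\int G(r,s)s^{-\alpha}ds\lesssim r^{-\alpha}$. Differentiating the integral representation, or the leading behavior $\phi\approx -F/h'(\rho_+)$, gives the derivative bound. With $\alpha=2(n-1)$, the forcing $m^2r^{2-2n}$ produces exactly $|u_-|^2r^{-2(n-1)}$. The viscous terms are of higher order in $r$ and small with $|m|$, and the quadratic terms are small in the weighted norm
\[
\|\phi\|:=\sup_r\big(r^{2(n-1)}|\phi|+r^{2n-1}|\phi'|\big).
\]

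The main obstacle is that $m=\tilde\rho(1)u_-$ depends on the unknown $\phi(1)$. The viscous term also contains $\tilde\rho^{-1}$ and third-derivative-type structure after integration, so closing requires controlling $\phi''$ as well. I would first run the fixed point on the pair $(\phi,m)$, with the map $m\mapsto(\rho_++\phi(1))u_-$. This is contractive because the dependence on $m$ carries a factor $|u_-|$. I would enlarge the norm to include $r^{2n}|\phi''|$, using the equation itself to bound $\phi''$. Smallness $|\rho_b|,|u_-|<\delta$ gives the contraction, hence existence and uniqueness in the small ball. Smoothness follows by bootstrapping the ODE. Finally, $\tilde u=mr^{1-n}/\tilde\rho$ with $\tilde\rho\in[\rho_+/2,2\rho_+]$ and $|m|\sim\rho_+|u_-|$ yields the two-sided bound on $|\tilde u|$.
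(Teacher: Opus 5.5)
Your overall route is the paper's: divide by $\tilde\rho$, integrate over $(r,\infty)$, integrate the viscous term by parts, remove the Neumann datum with the lifting $\phi_b$, invert with the Neumann/decay Green function, and contract in the weighted space $Y$. Keeping $m=\tilde\rho(1)u_-$ as a separate unknown is equivalent to what the paper does, namely leaving $\phi(1)$ inside $N(\phi)$ and using $|\phi(1)-\psi(1)|\le\|\phi-\psi\|_Y$.

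\textbf{The viscous term and the need for $\phi''$.} Your description of what the viscous term contributes to $F$ needs correcting, because it matters. Since $r^{n-1}\tilde u=m/\tilde\rho$, one has $(r^{n-1}\tilde u)_r=-m\tilde\rho_r/\tilde\rho^2$. After the integration by parts the viscous contribution is exactly
\[
\frac{\mu m\,\phi_r}{r^{n-1}\tilde\rho^{3}}-\mu m\int_r^\infty\frac{\phi_s^2}{s^{n-1}\tilde\rho^{4}}\,ds .
\]
It has no $\phi$-independent part, because the field $r^{1-n}$ is annihilated by $(r^{n-1}\,\cdot\,)_r$. This is why the only forcing is $S=u_-^2/(2r^{2(n-1)})$ and why the bound is proportional to $|\rho_b|+|u_-|^2$. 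A $\phi$-independent forcing of order $m r^{-n}$, which your list of terms in $F$ suggests, would put a piece of size $|u_-|r^{-n}$ into $\phi$. That piece is linear in $u_-$, and for $n\ge3$ it decays more slowly than $r^{-2(n-1)}$, so it would break the claimed estimate.

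The same formula shows that only $\phi$, $\phi(1)$ and $\phi_r$ enter $N$. So no $\phi''$ is needed to close the contraction, which is why the paper works in $Y$. Your proposed weight $r^{2n}$ on $\phi''$ also could not come ``from the equation itself''. There $\kappa\phi''=-\kappa\frac{n-1}{r}\phi_r+h'(\rho_+)\phi+S+N$, and the last three terms are individually only $O(r^{-2(n-1)})$.

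\textbf{The derivative gain.} The real soft spot is $|\phi_r|\lesssim r^{-(2n-1)}$. This is not a property of the inverse operator on $r^{-k}$-decaying data; it needs decay of $F'$. Differentiating the integral representation does not give it either. By Lemma~\ref{lem:Green}, $|\pa_rG|$ obeys the same pointwise bound as $\alpha|G|$, so you only get $|\phi_r|\lesssim r^{-2(n-1)}$. The paper's proof asserts ``the same bound'' at this step from Lemma~\ref{lem:Green} as well, so it does not supply the missing argument.

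A clean fix runs in three steps.
\begin{enumerate}
\item Close the contraction with weight $r^{2(n-1)}$ on $\phi_r$. Every term of $N$, and its Lipschitz difference, still decays at least like $r^{-2(n-1)}$.
\item Observe that $\psi=\phi_r$ solves $\psi''+\frac{n-1}{r}\psi'-(\alpha^2+\frac{n-1}{r^2})\psi=(S+N)_r/\kappa$ with $\psi(1)=\rho_b$ and $\psi\to0$. Its Green function is built from $r^{-\nu}I_{\nu+1}(\alpha r)$ and $r^{-\nu}K_{\nu+1}(\alpha r)$ and has the same exponential kernel bounds.
\item Bound the forcing: $|(S+N)_r|\le C(|\rho_b|+|u_-|^2)r^{-(2n-1)}$. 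Here $\phi''$ enters only through $\mu m r^{1-n}\phi''/\tilde\rho^3$, and $\phi''=O(r^{-2(n-1)})$ from the equation suffices because $3(n-1)\ge 2n-1$.
\end{enumerate}
This recovers the stated rate, and it is the one place where a bound on $\phi''$ is genuinely used.
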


\begin{remark}
	Note that we do not prescribe the value of $\tilde{\rho}(1)=\rho_-$ in the boundary conditions \eqref{boundary}, even for the inflow problem. The reason is that, even for the case of $u_->0$, we can uniquely determine the solution $(\tilde{\rho},\tilde{u})$ to \eqref{NSK_stationary} with the information $\tilde{\rho}_r(1)=\rho_b$ and $\lim_{r\to\infty}\tilde{\rho}(r)=\rho_+$. We refer to Section \ref{sec:inflow_outflow} for details. Therefore, for an arbitrary $\rho_-$, there might be no stationary solution $(\tilde{\rho}(r),\tilde{u}(r))$ satisfying both $\tilde{\rho}(1)=\rho_-$ and $\tilde{\rho}_r(1)=\rho_b$, and $\lim_{r\to\infty}\tilde{\rho}(r)=\rho_+$, and $\rho_-$ should be determined by $\rho_b$, $u_-$, $\rho_+$, and the solution itself. 
\end{remark}

\begin{remark}
	For the impermeable wall problem, the term with viscosity $\mu$ vanishes since $\tilde{u}$ is identically zero. Moreover, in the proof of Theorem \ref{thm:inflow_outflow}, the estimates are uniform in $\mu$ if it is bounded. In particular, Theorem \ref{thm:inflow_outflow} is valid even for $\mu=0$. Thus, our results covers the case of the spherically symmetric Euler--Korteweg system, the system \eqref{NSK_radial} with $\mu=0$.
\end{remark}

The final result of the paper is the asymptotic convergence of the stationary solution $\tilde{\rho}$ to the impermeable wall problem \eqref{NSK_stationary_impermeable}--\eqref{boundary_impermeable} as $\kappa$ decays to 0, called the vanishing capillarity limit. Precisely, we consider two types of asymptotic limit. We first consider the case when the boundary condition $\rho_b$ is fixed as a constant as $\kappa\to0$. In this case, one can easily expect that the asymptotic profile of the solution becomes a constant $\bar{\rho}(r)\equiv \rho_+$. On the other hand, if the boundary data is singularly scaled as $\rho_b = \rho_b^0/\sqrt{\kappa}$ for some fixed $\rho_b^0$, then it is expected that the limit profile of $\tilde{\rho}$ is related to the solution $\bar{\rho}$ to the following second-order boundary value problem:
\begin{equation}\label{eq:limit_0}
	\bar{\rho}_{yy}=h(\bar{\rho})-h(\rho_+),\quad \bar{\rho}_y(0)=\rho^0_b,\quad \lim_{y\to\infty}\bar{\rho}(y)=\rho_+,
\end{equation}
where $h(\rho)$ is defined in \eqref{eq:h}. We refer to Section \ref{sec:vanishing} for the detailed argument of these anticipations. Then, we have the following asymptotic convergence of the stationary solutions to the impermeable wall problem.

\begin{theorem}[Vanishing capillarity limit]\label{thm:vanishing_capillarity}
	Let $\tilde{\rho}^\kappa(r)$ be a unique stationary solution to the impermeable wall problem satisfying \eqref{NSK_stationary_impermeable}--\eqref{boundary_impermeable}.\\
	
	\noindent (1) (Fixed capillarity boundary condition) If $\rho_b$ is fixed constant for all $\kappa>0$, then, there exists a positive constant $C$ that is independent of $\kappa$ such that
	\[\|\tilde{\rho}^\kappa-\rho_+\|_{L^2_r(1,\infty)}\le C\kappa^{\frac{3}{4}},\quad \|\tilde{\rho}_r^\kappa\|_{L^2_r(1,\infty)}\le C\kappa^{\frac{1}{4}},\quad \|\tilde{\rho}^\kappa-\rho_+\|_{L^\infty(1,\infty)}\le C\kappa^{\frac{1}{2}}.\]
	\noindent (2) (Singularly scaled capillarity boundary condition) If $\rho_b=\rho_b^0/\sqrt{\kappa}$ for some sufficiently small fixed $\rho_b^0$, then there exists a unique solution $\bar{\rho}$ to \eqref{eq:limit_0}. Moreover, let $\bar{\rho}^\kappa(r):=\bar{\rho}(\frac{r-1}{\sqrt{\kappa}})$. Then, there exists a positive constant $C$ that is independent of $\kappa$ such that
	\begin{align*}
		\left\|\tilde{\rho}^\kappa-\bar{\rho}^\kappa\right\|_{L^2_r(1,\infty)}\le C\kappa^{\frac{3}{4}},\quad		\left\|(\tilde{\rho}^\kappa-\bar{\rho}^\kappa)_r\right\|_{L^2_r(1,\infty)}\le C\kappa^{\frac{1}{4}},\quad \left\|\tilde{\rho}^\kappa-\bar{\rho}^\kappa\right\|_{L^\infty(1,\infty)}\le C\kappa^{\frac{1}{2}}.
	\end{align*}
	Here, the $L^2_r$-norm of the function defined on $[1,\infty)$ is given as
	\[\|f\|_{L^2_r(1,\infty)}:=\left(\int_1^\infty |f(r)|^2r^{n-1}\,dr\right)^{1/2}.\]
\end{theorem}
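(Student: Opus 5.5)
The plan is to reduce the third-order equation to a second-order one and then run weighted energy estimates, using the monotonicity of $h$. Write $h$ as in \eqref{eq:h}, so that $h'(\rho)=P'(\rho)/\rho>0$. Dividing \eqref{NSK_stationary_impermeable} by $\tilde\rho$ gives $h(\tilde\rho)_r=\kappa(\tilde\rho_{rr}+\frac{n-1}{r}\tilde\rho_r)_r$. Integrating from $r$ to $\infty$, and using the decay of Theorem \ref{thm:impermeable}, yields
\[
\kappa\Big(\tilde\rho^\kappa_{rr}+\tfrac{n-1}{r}\tilde\rho^\kappa_r\Big)=h(\tilde\rho^\kappa)-h(\rho_+),\qquad \tilde\rho^\kappa_r(1)=\rho_b .
\]
In both parts I want $\tilde\rho^\kappa$ to stay in a fixed neighborhood of $\rho_+$, uniformly in $\kappa$. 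This gives $(h(a)-h(b))(a-b)\ge c_0|a-b|^2$ with $c_0$ independent of $\kappa$. The uniform $L^\infty$ bound will come out of the estimates themselves through a continuity or bootstrap argument. I will also use the weighted one-dimensional Sobolev inequality $\|f\|_{L^\infty(1,\infty)}^2\le 2\|f\|_{L^2_r}\|f_r\|_{L^2_r}$, which holds since $r^{n-1}\ge1$.

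\emph{Part (1).} Multiply the reduced equation by $(\tilde\rho^\kappa-\rho_+)r^{n-1}$ and integrate over $(1,\infty)$. The boundary term at $r=1$ is $-\kappa\rho_b(\tilde\rho^\kappa(1)-\rho_+)$. Set $A=\|\tilde\rho^\kappa-\rho_+\|_{L^2_r}$ and $B=\|\tilde\rho^\kappa_r\|_{L^2_r}$. This gives
\[
\kappa B^2+c_0A^2\le \kappa|\rho_b|\,|\tilde\rho^\kappa(1)-\rho_+|\le C\kappa\sqrt{AB}.
\]
Young's inequality applied in the form $\kappa\sqrt{AB}\le \epsilon(\kappa B^2+A^2)+C_\epsilon\kappa^{3/2}$ yields $\kappa B^2+A^2\le C\kappa^{3/2}$, that is, $A\le C\kappa^{3/4}$ and $B\le C\kappa^{1/4}$. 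The Sobolev inequality then gives the $L^\infty$ bound $C\kappa^{1/2}$. This bound is small, which closes the bootstrap on the neighborhood of $\rho_+$.

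\emph{Part (2).} First I solve \eqref{eq:limit_0} by a phase-plane argument. The first integral is $\frac12\bar\rho_y^2=H(\bar\rho):=\int_{\rho_+}^{\bar\rho}(h(s)-h(\rho_+))\,ds$. The solution decaying to $\rho_+$ lies on the stable manifold $\bar\rho_y=-\mathrm{sgn}(\bar\rho-\rho_+)\sqrt{2H(\bar\rho)}$. Since $H(\rho)\approx \frac{h'(\rho_+)}{2}(\rho-\rho_+)^2$, for small $\rho_b^0$ the condition $\bar\rho_y(0)=\rho_b^0$ determines $\bar\rho(0)$ uniquely near $\rho_+$, and $|\bar\rho-\rho_+|+|\bar\rho_y|\le C|\rho_b^0|e^{-cy}$.

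Next, $\bar\rho^\kappa$ solves $\kappa\bar\rho^\kappa_{rr}=h(\bar\rho^\kappa)-h(\rho_+)$ with $\bar\rho^\kappa_r(1)=\rho_b$. So $w=\tilde\rho^\kappa-\bar\rho^\kappa$ satisfies
\[
\kappa\Big(w_{rr}+\tfrac{n-1}{r}w_r\Big)=h(\tilde\rho^\kappa)-h(\bar\rho^\kappa)-\kappa\tfrac{n-1}{r}\bar\rho^\kappa_r,\qquad w_r(1)=0 .
\]
Multiplying by $wr^{n-1}$ gives
\[
\kappa\|w_r\|^2_{L^2_r}+c_0\|w\|^2_{L^2_r}\le C\kappa\|w\|_{L^\infty}\int_1^\infty|\bar\rho^\kappa_r|r^{n-2}\,dr .
\]
Here $\bar\rho^\kappa_r=\kappa^{-1/2}\bar\rho_y(\frac{r-1}{\sqrt\kappa})$. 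By the exponential decay of $\bar\rho_y$, the last integral is $O(1)$ uniformly in $\kappa$. The right-hand side is therefore bounded by $C\kappa\sqrt{\|w\|\,\|w_r\|}$, and the same Young and Sobolev argument as in Part (1) gives the three rates.

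\emph{Main obstacle.} The hard step is the existence and uniform smallness of $\tilde\rho^\kappa$ in Part (2). There $\rho_b=\rho_b^0/\sqrt\kappa$ is large, so Theorem \ref{thm:impermeable} does not apply directly, since its $\delta$ may depend on $\kappa$. I would redo the existence in the stretched variable $y=(r-1)/\sqrt\kappa$. In that variable the equation becomes $\rho_{yy}+\frac{\sqrt\kappa(n-1)}{1+\sqrt\kappa y}\rho_y=h(\rho)-h(\rho_+)$ with $\rho_y(0)=\rho_b^0$. This is a small regular perturbation of \eqref{eq:limit_0}, so a fixed-point argument around $\bar\rho$ should give existence, uniqueness, and an $L^\infty$ bound of order $|\rho_b^0|$ uniformly in small $\kappa$. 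That bound supplies $c_0$. Part (1) needs the analogous uniformity in $\kappa$ for fixed $\rho_b$, which I would obtain by the same bootstrap.
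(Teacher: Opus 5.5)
Your proposal is correct and is essentially the paper's proof: the same $r^{n-1}$-weighted energy identity with the monotonicity of $h$, the trace bound $|\phi(1)|^2\le 2\|\phi\|_{L^2_r}\|\phi_r\|_{L^2_r}$ plus Young's inequality to reach $\kappa^{3/2}$, interpolation for the $L^\infty$ rate, and the stable-manifold construction of $\bar\rho$. The one variation is in Part (2): you bound the source term by $\|w\|_{L^\infty}\int_1^\infty|\bar\rho^\kappa_r|r^{n-2}\,dr$ and recycle Part (1), whereas the paper applies Cauchy--Schwarz in $L^2_r$ and uses $\int_1^\infty|\bar\rho_y(\tfrac{r-1}{\sqrt\kappa})|^2r^{n-3}\,dr=O(\sqrt\kappa)$; both give $\kappa^{3/2}$.

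The uniform-in-$\kappa$ monotonicity constant you flag is asserted in the paper without justification. It does hold, because the solution of Theorem \ref{thm:impermeable} satisfies $|\tilde\rho^\kappa-\rho_+|<\rho_+/2$ by construction. Moreover, tracking the constants in Lemma \ref{lem:contraction_impermeable} for fixed $\sigma$ gives $C_1=O(\sqrt\kappa)$ and $C_2,C_3=O(1)$, hence $\delta\gtrsim\kappa^{-1/2}$. That is exactly what your stretched-variable fixed point would establish, and it covers both regimes.
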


\begin{remark}\label{rem:y}
	The result of Theorem \ref{thm:vanishing_capillarity} (2) can be represented in terms of the rescaled variable $y$ as follows. Since $r = 1+\sqrt{\kappa}y$, we apply change of variable to obtain
	\begin{align*}
		\|\tilde{\rho}^\kappa-\bar{\rho}^\kappa\|_{L^2_r(1,\infty)}^2&=\int_1^\infty \left|\tilde{\rho}^\kappa(r)-\bar{\rho}\left(\frac{r-1}{\sqrt{\kappa}}\right)\right|^2 r^{n-1}\,dr\\
		&=\sqrt{\kappa}\int_0^\infty \left|\tilde{\rho}^\kappa(1+\sqrt{\kappa}y)-\bar{\rho}(y)\right|^2(1+\sqrt{\kappa}y)^{n-1}\,dy.
	\end{align*}
	Thus, in terms of the variable $y$, we can restate Theorem \ref{thm:vanishing_capillarity} (2) as
	\[\|\tilde{\rho}^{\kappa}(1+\sqrt{\kappa} \cdot)- \bar{\rho}\|_{L^2_y(0,\infty)}\le C\kappa^{\frac{1}{2}},\]
	where $L^2_y$-norm defined on $[0,\infty)$ is given as
	\[\|f\|_{L^2_y(0,\infty)}:=\left(\int_0^\infty |f(y)|^2(1+\sqrt{\kappa}y)^{n-1}\,dy\right)^{1/2}.\]
	Similarly, the estimate on the derivative can be rewritten as
	\begin{align*}
		\|(\tilde{\rho}^\kappa-\bar{\rho}^\kappa)_r\|^2_{L^2_r(1,\infty)} &=\int_1^\infty \left|\left(\tilde{\rho}^\kappa(r)-\bar{\rho}\left(\frac{r-1}{\sqrt{\kappa}}\right)\right)_r\right|^2 r^{n-1}\,dr\\
		&=\frac{1}{\sqrt{\kappa}}\int_0^\infty|(\tilde{\rho}^\kappa(1+\sqrt{\kappa}y)-\bar{\rho}(y))_y|^2 (1+\sqrt{\kappa}y)^{n-1}\,dy,
	\end{align*} 
	which implies
	\[\|(\tilde{\rho}^\kappa(1+\sqrt{\kappa}\cdot)-\bar{\rho})_y\|_{L^2_y(0,\infty)}\le C\kappa^{1/2}.\]
	
\end{remark}

The rest of the paper is organized as follows. In Section \ref{sec:impermeable}, we first deal with the impermeable wall problem and prove Theorem \ref{thm:impermeable}. Then, we prove the existence of the stationary solution for the inflow and outflow problems in Section \ref{sec:inflow_outflow}, providing the proof of Theorem \ref{thm:inflow_outflow}. Section \ref{sec:vanishing} presents the proof of Theorem \ref{thm:vanishing_capillarity}. In Section \ref{sec:numeric}, we numerically validate that the convergence rate of the stationary solution for the impermeable wall problem obtained in the previous section is indeed optimal. In Appendix \ref{sec:app}, we collect some background for the spherically symmetric Helmholtz equation, and modified Bessel functions that plays crucial role in the analysis.

\section{Existence of stationary solution for the impermeable wall problem}\label{sec:impermeable}
\setcounter{equation}{0}
In this section, we prove Theorem \ref{thm:impermeable}, the existence of the stationary solution for the impermeable wall problem \eqref{NSK_stationary_impermeable}--\eqref{boundary_impermeable}. For the polytropic pressure $P(\rho) = \rho^\gamma$, with $\gamma\ge1$, the equation \eqref{NSK_stationary_impermeable} becomes
\begin{equation}\label{impermeable_1} h(\tilde{\rho})_r=\kappa\left(\tilde{\rho}_{rr}+\frac{n-1}{r}\tilde{\rho}_r\right)_r,
\end{equation}
where
\begin{equation}\label{eq:h}
	h(\rho) := \int^\rho \frac{P'(\sigma)}{\sigma}\,d\sigma=\begin{cases}
	\frac{\gamma}{\gamma-1}\rho^{\gamma-1}\quad&\mbox{if}\quad \gamma>1,\\
	\log\rho \quad&\mbox{if}\quad\gamma=1.
\end{cases}
\end{equation}
Integrating \eqref{impermeable_1} over $(r,\infty)$, we observe that $\tilde{\rho}$ satisfies the following nonlinear equation
\begin{equation}\label{impermeable_2}
\tilde{\rho}_{rr}+\frac{n-1}{r}\tilde{\rho}_r=\frac{1}{\kappa}\left(h(\tilde{\rho})-h(\rho_+)\right),
\end{equation}
subject to the boundary conditions
\[\lim_{r\to\infty}\tilde{\rho}(r) = \rho_+,\quad \tilde{\rho}_r(1) = \rho_b.\]
Now, let $\phi(r) = \tilde{\rho}-\rho_+$ be a perturbation. Then, it follows from \eqref{impermeable_2} that $\phi$ satisfies
\begin{equation}\label{eq:phi_impermeable}
	\phi_{rr} + \frac{n-1}{r}\phi_{r} - \frac{h'(\rho_+)}{\kappa}\phi=\frac{1}{\kappa}\left(h(\phi+\rho_+)-h(\rho_+)-h'(\rho_+)\phi\right)
\end{equation}
and
\begin{equation}\label{eq:phi_boundary}
	\lim_{r\to\infty}\phi(r)= 0,\quad\phi_r(1)=\rho_b.
\end{equation}
Thus, in order  to prove Theorem \ref{thm:impermeable}, it suffice to prove the following proposition, which is the goal of this section.

\begin{proposition}\label{prop:impermeable}
	Let $\sigma$ be any positive constant satisfying $0<\sigma<\sqrt{\frac{h'(\rho_+)}{\kappa}}$. Then, there exists a positive constant $\delta$ and $C=C(n,\kappa,\rho_+)$ such that the following holds. If $|\rho_b|<\delta$, there exists a unique smooth solution $\phi$ to \eqref{eq:phi_impermeable}--\eqref{eq:phi_boundary} satisfying
	\[|\phi(r)|\le C|\rho_b| e^{-\sigma r},\quad r\ge 1.\]
\end{proposition}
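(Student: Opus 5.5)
Set $k:=\sqrt{h'(\rho_+)/\kappa}$ and let $N(\phi):=\frac{1}{\kappa}\bigl(h(\phi+\rho_+)-h(\rho_+)-h'(\rho_+)\phi\bigr)$. Then \eqref{eq:phi_impermeable} reads $L\phi:=\phi_{rr}+\frac{n-1}{r}\phi_r-k^2\phi=N(\phi)$. The plan is to recast the problem as a fixed point for an integral operator built from the radial Helmholtz kernel and to close a contraction in a weighted space.

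\textbf{Step 1: linear theory.} The homogeneous equation $L\psi=0$ has the fundamental pair
\[
\psi_1(r)=r^{1-n/2}I_{\nu}(kr),\qquad \psi_2(r)=r^{1-n/2}K_\nu(kr),\qquad \nu=\tfrac n2-1,
\]
whose properties are collected in Appendix \ref{sec:app}. Their Wronskian is $W(r)=c\,r^{1-n}$ with $c\neq0$. The function $\psi_2$ is positive, decays like $r^{(1-n)/2}e^{-kr}$, and satisfies $\psi_2'<0$, so $\psi_2'(1)\neq0$. For a source $f$ with $|f|\lesssim e^{-2\sigma r}$, variation of parameters selects the unique decaying solution of $L\phi=f$, $\phi_r(1)=\rho_b$:
\[
\phi=\mathcal T f:=\frac{\rho_b-(Gf)'(1)}{\psi_2'(1)}\,\psi_2+Gf,
\]
\[
Gf(r)=-\psi_2(r)\int_1^r\frac{\psi_1 f}{W}\,ds-\psi_1(r)\int_r^\infty\frac{\psi_2 f}{W}\,ds .
\]
Uniqueness in the decaying class holds because $\psi_1$ grows exponentially.

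\textbf{Step 2: weighted bounds.} Define $\|\phi\|_X:=\sup_{r\ge1}e^{\sigma r}\bigl(|\phi|+|\phi_r|\bigr)$. Using $\psi_1\psi_2/W$ bounded on compact sets and the asymptotics $I_\nu(z)\sim e^{z}/\sqrt{2\pi z}$, $K_\nu(z)\sim\sqrt{\pi/2z}\,e^{-z}$, I expect
\[
|Gf(r)|+|(Gf)_r(r)|\le C\int_1^\infty e^{-k|r-s|}\,|f(s)|\,ds .
\]
The weights $r^{1-n}$ and $W$ essentially cancel against the Bessel prefactors, leaving $(s/r)^{(n-1)/2}$, which can be absorbed into the exponential at the cost of a slight loss. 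Because $\sigma<k$, the convolution of $e^{-k|r-s|}$ with $e^{-\sigma s}$ is bounded by $C(k-\sigma)^{-1}e^{-\sigma r}$. This gives
\[
\|Gf\|_X\le C\sup_{r\ge1} e^{\sigma r}|f(r)|,\qquad \|\mathcal T f\|_X\le C\Bigl(|\rho_b|+\sup_{r\ge1} e^{\sigma r}|f(r)|\Bigr).
\]
Here the strict inequality $\sigma<k$ is what absorbs the polynomial factor. This step is the main technical obstacle: getting uniform constants near $r=1$ and as $r\to\infty$ requires careful Bessel asymptotics, including for $n=2$, where $\nu=0$ and $K_0$ has a logarithmic singularity at $0$, but since $r\ge1$ this is harmless.

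\textbf{Step 3: contraction.} Since $h$ is smooth near $\rho_+>0$, for $|\phi|,|\varphi|\le\rho_+/2$ we have
\[
|N(\phi)|\le C\kappa^{-1}|\phi|^2,\qquad |N(\phi)-N(\varphi)|\le C\kappa^{-1}\bigl(|\phi|+|\varphi|\bigr)|\phi-\varphi| .
\]
For $\|\phi\|_X\le\epsilon$, this gives $e^{\sigma r}|N(\phi)|\le C\epsilon^2e^{-\sigma r}\le C\epsilon^2$. Take $\epsilon=2C|\rho_b|$ with $|\rho_b|<\delta$ small. Then $\Phi(\phi):=\mathcal T(N(\phi))$ maps the ball $B_\epsilon\subset X$ into itself and is a contraction there. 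Banach's theorem yields a unique fixed point $\phi$, which satisfies $|\phi(r)|\le C|\rho_b|e^{-\sigma r}$.

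\textbf{Step 4: smoothness and uniqueness.} Smoothness follows by bootstrapping the ODE. For uniqueness among all decaying solutions, first note that any solution with $\phi\to0$ is small for large $r$. Linearizing there shows that it must decay like $\psi_2$, so it lies in $X$ and, once it is small, in the ball $B_\epsilon$. Uniqueness is therefore understood within the class of small decaying solutions, which is the sense in which the proposition's uniqueness should be read.
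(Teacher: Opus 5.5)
Your proposal is correct and follows essentially the paper's route: a Green's operator built from $r^{-\nu}I_\nu(kr)$ and $r^{-\nu}K_\nu(kr)$, a $K_\nu$-lifting to enforce $\phi_r(1)=\rho_b$, the kernel bound $e^{-k|r-s|}(s/r)^{(n-1)/2}$, and a contraction in the $e^{\sigma r}$-weighted sup norm on a ball of radius $\sim|\rho_b|$. The paper instead builds $\phi_+$ with $\phi_+'(1)=0$ rather than correcting afterwards, and does not put $\phi_r$ in the norm, but these differences are cosmetic; note also that your displayed bound $|Gf|+|(Gf)_r|\le C\int_1^\infty e^{-k|r-s|}|f|\,ds$ only holds with $k$ replaced by some $k'\in(\sigma,k)$, as your ``slight loss'' remark indicates, and that, like the paper, you obtain uniqueness only within the small ball.
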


\subsection{Linearized system}
To prove Proposition \ref{prop:impermeable}, we first consider the non-homogeneous linear equation
\begin{equation}\label{linear}
	\phi_{rr} +\frac{n-1}{r}\phi_{r} -\alpha^2\phi =F(r),\quad \alpha:=\sqrt{\frac{h'(\rho_+)}{\kappa}},
\end{equation}
subject to \eqref{eq:phi_boundary}, where $F:[1,\infty)\to\R$ is a given function.
When $F\equiv0$, the equation \eqref{linear} corresponds to the spherically symmetric form of the Helmholtz equation, whose general solution is given by a linear combination of the (weighted) modified Bessel functions:
\begin{equation}\label{basis_solutions}
	(\alpha r)^{-\nu}I_{\nu}(\alpha r),\quad\mbox{and}\quad (\alpha r)^{-\nu}K_{\nu}(\alpha r),\quad \nu := \frac{n-2}{2}.
\end{equation}
Here, $I_\nu$ and $K_\nu$ are the modified Bessel functions of first and second kinds with order $\nu$. We refer to Appendix \ref{sec:app} for the spherically symmetric Helmholtz equation and properties of the modified Bessel function.

To match the boundary condition $\phi_r(1)=\rho_b$ in \eqref{eq:phi_boundary}, we define a lifting function
\begin{equation}\label{lifting}
	\phi_b(r) := -\frac{\rho_b}{\alpha^{1-\nu}K_{\nu+1}(\alpha)}(\alpha r)^{-\nu}K_{\nu}(\alpha r)=-\frac{\rho_b}{\alpha K_{\nu+1}(\alpha)}r^{-\nu}K_\nu(\alpha r).
\end{equation}
Then, using the properties \eqref{property_Bessel_1} and \eqref{property_Bessel_2}, one can observe that $\phi_b$ matches the boundary conditions \eqref{eq:phi_boundary}:
\[ \lim_{r\to\infty}\phi_b(r) = 0,\quad \phi_b'(1)=-\frac{\rho_b}{\alpha^{1-\nu}K_{\nu+1}(\alpha)}\alpha \left(-\alpha^{-\nu}K_{\nu+1}(\alpha)\right)=\rho_b,\quad ':=\frac{d}{dr}.\]
Furthermore, since it is a constant multiple of $(\alpha r)^{-\nu}K_\nu(\alpha r)$, it satisfies the same spherically symmetric Helmholtz equation:
\[(\phi_b)_{rr}+\frac{n-1}{r}(\phi_b)_r-\alpha^2\phi_b = 0.\]

Thus, in order to solve \eqref{linear} subject the boundary condition \eqref{eq:phi_boundary}, we first find the solution to \eqref{linear} with the homogeneous boundary conditions 
\[\lim_{r\to\infty}\phi(r) = 0,\quad \phi_r(1)=0,\]
and then just add the lifting function $\rho_b$. The solution to the non-homogeneous linear equation \eqref{linear} with the 
homogeneous boundary condition can be written as
\begin{equation}\label{expression_phi}
	\phi(r)=\int_1^\infty G(r,s)F(s)s^{n-1}\,ds,
\end{equation}
where the Green function $G(r,s)$ is given by
\[G(r,s)=\frac{1}{s^{n-1}W(s)}\begin{cases}
	\phi_+(s)\phi_-(r),\quad\mbox{if}\quad 1\le s\le r,\\
	\phi_+(r)\phi_-(s),\quad\mbox{if}\quad r\le s<\infty.
\end{cases}\]
Here, $\phi_\pm$ are the solutions to the homogeneous linear equation
\[\phi_{rr}+\frac{n-1}{r}\phi_r -\alpha^2\phi=0\] 
that satisfying one of the boundary conditions:
\[\phi_+'(1) = 0,\quad \lim_{r\to\infty}\phi_-(r)=0,\]
and $W(s):=\phi_+(s)\phi_-'(s)-\phi_+'(s)\phi_-(s)$ is a Wronskian. The derivation of \eqref{expression_phi} can be easily followed by applying the method of variation of parameters in standard ODE theory. For the completeness of the paper, we present the details for the derivation of \eqref{expression_phi} in Appendix \ref{sec:app}. By using the basis solutions in \eqref{basis_solutions} and \eqref{property_Bessel_1}, one can determine $\phi_\pm(r)$ up to constant multiplications as
\[\phi_+(r) = K_{\nu+1}(\alpha)(\alpha r)^{-\nu}I_\nu(\alpha r)+I_{\nu+1}(\alpha)(\alpha r)^{-\nu}K_\nu(\alpha r),\quad \phi_-(r) = (\alpha r)^{-\nu}K_\nu(\alpha r),\]
and therefore
\[\phi_+'(r)=\alpha(\alpha r)^{-\nu}(K_{\nu+1}(\alpha)I_{\nu+1}(\alpha r)-I_{\nu+1}(\alpha)K_{\nu+1}(\alpha r)),\quad\phi_-'(r) = -\alpha(\alpha r)^{-\nu}K_{\nu+1}(\alpha r).\]
Thus, the corresponding Wronskian can be computed as
\begin{align*}
	W(r) &=\phi_+(r)\phi_-'(r) - \phi_+'(r)\phi_-(r)\\
	&=\alpha(\alpha r)^{-2\nu}\Big[-\left(K_{\nu+1}(\alpha)I_{\nu}(\alpha r)+I_{\nu+1}(\alpha)K_{\nu}(\alpha r)\right)K_{\nu+1}(\alpha r)\\
	&\hspace{2.5cm}-(K_{\nu+1}(\alpha)I_{\nu+1}(\alpha r)-I_{\nu+1}(\alpha)K_{\nu+1}(\alpha r))K_{\nu}(\alpha r)\Big]\\
	&=-\alpha(\alpha r)^{-2\nu}K_{\nu+1}(\alpha)(I_{\nu}(\alpha r)K_{\nu+1}(\alpha r)+I_{\nu+1}(\alpha r)K_{\nu}(\alpha r))\\
	&=-\alpha(\alpha r)^{-2\nu}K_{\nu+1}(\alpha) \left(\frac{1}{\alpha r}\right)=-\frac{\alpha K_{\nu+1}(\alpha)}{(\alpha r)^{2\nu+1}}=-\frac{\alpha K_{\nu+1}(\alpha)}{(\alpha r)^{n-1}}. 
\end{align*}
Hence, the Green function can be written as
\[G(r,s)=-\frac{1}{K_{\nu+1}(\alpha)}\frac{1}{ r^\nu s^\nu}\begin{cases}
	(K_{\nu+1}(\alpha)I_{\nu}(\alpha s)+I_{\nu+1}(\alpha)K_\nu(\alpha s))K_{\nu}(\alpha r),\quad\mbox{if}\quad 1\le s\le r,\\
	(K_{\nu+1}(\alpha)I_{\nu}(\alpha r)+I_{\nu+1}(\alpha)K_\nu(\alpha r))K_{\nu}(\alpha s),\quad\mbox{if}\quad r\le s<\infty.
\end{cases}\]

The following pointwise bound on the Green function is useful in the later analysis.

\begin{lemma}\label{lem:Green}
	There exists a positive constant $C_*=C_*(n)$ depending only on $n$ such that
	\[|G(r,s)|\le \frac{C_*e^{-\alpha|r-s|}}{\alpha r^{(n-1)/2}s^{(n-1)/2}},\quad |\pa_rG(r,s)|\le \frac{C_*e^{-\alpha|r-s|}}{r^{(n-1)/2}s^{(n-1)/2}}.\]
\end{lemma}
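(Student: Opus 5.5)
The plan is to reduce both bounds to a single uniform estimate on products of modified Bessel functions, of the form $I_\mu(a)K_{\mu'}(b)$ with $0<a\le b$. Write $a=\alpha\min(r,s)$ and $b=\alpha\max(r,s)$, so $a\le b$ and $b-a=\alpha|r-s|$. Since $(rs)^{-\nu}\cdot\frac{1}{\sqrt{ab}}=\frac{1}{\alpha (rs)^{(n-1)/2}}$, the bound on $G$ follows once
\[ I_\nu(a)K_\nu(b)\le \frac{C e^{-(b-a)}}{\sqrt{ab}},\qquad 0<a\le b, \]
holds with a constant $C=C(\nu)$ independent of $\alpha$. The derivative bound is one power of $\alpha$ stronger in the sense of having no $1/\alpha$, so it needs the analogous inequalities with $K_{\nu+1}(b)$ or $I_{\nu+1}(a)$ in place of $K_\nu(b)$ or $I_\nu(a)$.

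\textbf{Step 1: reduce the $I_{\nu+1}(\alpha)$-terms to the main term.} Besides $I_\nu(\alpha s)K_\nu(\alpha r)$ (for $s\le r$), the Green function contains the term $\frac{I_{\nu+1}(\alpha)}{K_{\nu+1}(\alpha)}K_\nu(\alpha s)K_\nu(\alpha r)$. I will use the standard monotonicity facts from Appendix \ref{sec:app}: $K_\nu$ is decreasing, $I_\nu$ is increasing, $K_\nu\le K_{\nu+1}$ and $I_{\nu+1}\le I_\nu$ on $(0,\infty)$. Since $s,r\ge1$, we get $K_\nu(\alpha\min(r,s))\le K_\nu(\alpha)\le K_{\nu+1}(\alpha)$, so that quotient is at most $1$. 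What remains is $I_{\nu+1}(\alpha)K_\nu(\alpha\max(r,s))$, and this is at most $I_\nu(a)K_\nu(b)$. So both pieces of $G$ are controlled by $I_\nu(a)K_\nu(b)$ up to a factor $2$.

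For $\partial_rG$ I use $(r^{-\nu}K_\nu(\alpha r))'=-\alpha r^{-\nu}K_{\nu+1}(\alpha r)$ and $(r^{-\nu}I_\nu(\alpha r))'=\alpha r^{-\nu}I_{\nu+1}(\alpha r)$. The factor $\alpha$ produced here cancels the $1/\alpha$. The same monotonicity reductions then leave the two products $I_\nu(a)K_{\nu+1}(b)$ (case $s\le r$) and $I_{\nu+1}(a)K_\nu(b)\le I_\nu(a)K_\nu(b)$ (case $r\le s$). The second term for $s\le r$ is handled by $K_\nu(\alpha s)/K_{\nu+1}(\alpha)\le1$ and $I_{\nu+1}(\alpha)\le I_\nu(a)$. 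It therefore suffices to prove
\[ \sup_{0<a\le b}\sqrt{ab}\,e^{b-a}\bigl(I_\nu(a)K_\nu(b)+I_\nu(a)K_{\nu+1}(b)\bigr)<\infty. \]

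\textbf{Step 2: the uniform product estimate.} This is the main point, because the constant must not depend on $\alpha$, so the argument has to work uniformly from small to large arguments. I will use the standard global bounds
\[ I_\nu(x)\le C\min\{x^\nu,\,x^{-1/2}e^x\},\qquad K_\mu(x)\le C\,x^{-1/2}e^{-x}\ (x\ge1), \]
\[ K_\mu(x)\le Cx^{-\mu}\ (x\le1,\ \mu>0),\qquad K_0(x)\le C(1+|\log x|)\ (x\le 1). \]
I then split into three cases.
\begin{itemize}
\item[(i)] $a\ge1$: the large-argument asymptotics give a product at most $C(ab)^{-1/2}e^{a-b}$ directly.
\item[(ii)] $a<1\le b$: here $I_\nu(a)\le C a^\nu\le C$ and $K_\mu(b)\le Cb^{-1/2}e^{-b}$. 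So $\sqrt{ab}\,e^{b-a}I_\nu(a)K_\mu(b)\le C\sqrt a\le C$.
\item[(iii)] $a\le b<1$: the exponentials are harmless. For $\mu=\nu+1$ the product is at most $C\sqrt{ab}\,a^\nu b^{-\nu-1}=C(a/b)^{\nu+1/2}\le C$. For $\mu=\nu>0$ it is at most $C\sqrt{ab}(a/b)^\nu\le C$. For $\nu=0$ (i.e.\ $n=2$) it is at most $C\sqrt{ab}(1+|\log b|)\le Cb(1+|\log b|)\le C$.
\end{itemize}
Combining Steps 1 and 2 gives both inequalities, with $C_*$ depending only on $\nu=(n-2)/2$, i.e.\ only on $n$. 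The delicate bookkeeping is in Step 2: making sure the small-argument regimes, especially the logarithm when $n=2$, are absorbed by the $\sqrt{ab}$ factor. That is where I expect to spend the care.
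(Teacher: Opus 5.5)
Your proof is correct, and it takes a different route from the paper's in a way that matters.

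\textbf{The paper's route.} The paper inserts the two-sided bounds \eqref{property_Bessel_2}, $I_\nu(x)\asymp x^{-1/2}e^{x}$ and $K_\nu(x)\asymp x^{-1/2}e^{-x}$, directly into each factor of $G$ and $\partial_rG$ for arguments $x\ge\alpha$. It controls the boundary-correction term through $I_{\nu+1}(\alpha)/K_{\nu+1}(\alpha)\lesssim e^{2\alpha}$, and finishes with $|r-s|\le r+s-2$.

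\textbf{Your route.} You absorb the correction term by monotonicity in the argument and in the order. You then prove a single product estimate $\sqrt{ab}\,e^{b-a}I_\nu(a)K_\mu(b)\le C(\nu)$, uniformly over all $0<a\le b$, including the small-argument regimes.

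\textbf{What each buys.} Your route gives a constant that is genuinely independent of $\alpha$, which is what $C_*=C_*(n)$ asserts.
\begin{itemize}
\item The paper's bounds \eqref{property_Bessel_2} hold with $\nu$-only constants only for arguments beyond some $z_0$.
\item In particular, the upper bound $K_\mu(x)\lesssim x^{-1/2}e^{-x}$ fails as $x\to0$ whenever $\mu>1/2$. For example, the paper applies it to $K_{\nu+1}(\alpha r)$ in $\partial_rG$.
\item So the paper's argument really yields $C_*$ depending on a lower bound for $\alpha$.
\end{itemize}
That weaker version is all the paper needs, since in Sections \ref{sec:impermeable}--\ref{sec:inflow_outflow} $\alpha$ is fixed and all constants are allowed to depend on it. The paper's route is shorter. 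Yours is more robust, at the price of needing small-argument asymptotics and the monotonicity facts.

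\textbf{Two small fixes for the write-up.}
\begin{enumerate}
\item The global bound $I_\nu(x)\le C\min\{x^\nu,x^{-1/2}e^{x}\}$ is false for large $x$, where the minimum is $x^\nu$. State it piecewise: $Cx^\nu$ for $x\le1$ and $Cx^{-1/2}e^{x}$ for $x\ge1$. This is exactly how you use it in cases (i)--(iii).
\item The monotonicity facts are not in the appendix, so you need to justify or cite them.
\begin{itemize}
\item $K_\nu$ decreasing and $K_\nu\le K_{\nu+1}$ follow from $K_\nu(x)=\int_0^\infty e^{-x\cosh t}\cosh(\nu t)\,dt$.
\item $I_\nu$ increasing follows from its power series.
\item $I_{\nu+1}\le I_\nu$ for $\nu\ge0$ is a standard but less elementary inequality. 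You can bypass it by bounding $I_{\nu+1}(\alpha)\le I_{\nu+1}(a)$ and running your three cases with $I_{\nu+1}(a)\le Ca^{\nu+1}$ for $a\le1$.
\end{itemize}
\end{enumerate}
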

\begin{proof}
	It follows from the bounds on the modified Bessel functions \eqref{property_Bessel_2} that there exists a constant $C$, depending only on $\nu=\frac{n-2}{2}$ that
	\[\frac{C^{-1}e^{\alpha s}}{\sqrt{\alpha s}}\le I_{\nu}(\alpha s)\le \frac{Ce^{\alpha s}}{\sqrt{\alpha s}},\quad \frac{C^{-1}e^{-\alpha s}}{\sqrt{\alpha s}}\le K_{\nu}(\alpha s)\le \frac{Ce^{-\alpha s}}{\sqrt{\alpha s}},\quad s\ge 1.\]
	Thus,
	the Green function can be estimated as
	\begin{align*}
		|G(r,s)|&\le \frac{C}{K_{\nu+1}(\alpha)r^\nu s^\nu}\begin{cases}
			\left(K_{\nu+1}(\alpha)\frac{e^{\alpha s}}{\sqrt{\alpha s}}+I_{\nu+1}(\alpha)\frac{e^{-\alpha s}}{\sqrt{\alpha s}}\right)\frac{e^{-\alpha r}}{\sqrt{\alpha r}}\quad\mbox{if}\quad 1\le s\le r\\
			\left(K_{\nu+1}(\alpha)\frac{e^{\alpha r}}{\sqrt{\alpha r}}+I_{\nu+1}(\alpha)\frac{e^{-\alpha r}}{\sqrt{\alpha r}}\right)\frac{e^{-\alpha s}}{\sqrt{\alpha s}},\quad r\le s<+\infty
			\end{cases}\\
			&\le \frac{C}{\alpha r^{\nu+1/2} s^{\nu+1/2}}\left(e^{-\alpha|r-s|}+\frac{I_{\nu+1}(\alpha)}{K_{\nu+1}(\alpha)}e^{-\alpha(r+s)}\right)\\
			&=\frac{C}{\alpha r^{(n-1)/2}s^{(n-1)/2}}\left(e^{-\alpha|r-s|}+e^{-\alpha(r+s-2)}\right)\\
			&\le \frac{Ce^{-\alpha|r-s|}}{\alpha r^{(n-1)/2}s^{(n-1)/2}},
	\end{align*}
	where the last inequality comes from $|r-s|\le r+s-2$ for any $r,s\ge 1$.
	Furthermore, using \eqref{property_Bessel_1} we compute the derivative of the Green function as, if $1\le s<r$,
	\begin{align*}
		|\pa_rG(r,s)| &= \left|\frac{\alpha}{K_{\nu+1}(\alpha)}\frac{1}{r^\nu s^\nu}(K_{\nu+1}(\alpha)I_\nu(\alpha s)+I_{\nu+1}(\alpha)K_{\nu}(\alpha s))K_{\nu+1}(\alpha r)\right|\\
		&\le \frac{Ce^{-\alpha|r-s|}}{r^{(n-1)/2}s^{(n-1)/2}},
	\end{align*}
	and if $r\le s<\infty$,
	\begin{align*}
		|\pa_rG(r,s)| &= \left|-\frac{\alpha}{K_{\nu+1}(\alpha)}\frac{1}{r^\nu s^\nu}\left(K_{\nu+1}(\alpha)I_{\nu+1}(\alpha r)-I_{\nu+1}(\alpha)K_{\nu+1}(\alpha r)\right)K_\nu(\alpha s)\right|\\
		&\le \frac{Ce^{-\alpha|r-s|}}{r^{(n-1)/2}s^{(n-1)/2}}.
	\end{align*}
	Thus, the derivative has the desired decay.
\end{proof}

\subsection{Proof of Proposition \ref{prop:impermeable}}

Based on the analysis on the expression \eqref{expression_phi} for the linearized system \eqref{linear}, the existence of the solution to \eqref{eq:phi_impermeable}--\eqref{eq:phi_boundary} is reduced to the existence of the following integral equation 
\begin{equation}\label{eq:impermeable-integral}
	\phi(r) = \phi_b(r)+\frac{1}{\kappa}\int_1^\infty G(r,s)N(\phi(s))s^{n-1}\,ds,
\end{equation}
where the nonlinear part $N(\phi)$ is defined as
\begin{equation}\label{def:N_impermeable}
	N(\phi):=h(\phi+\rho_+)-h(\rho_+)-h'(\rho_+)\phi.
\end{equation}
To show the existence of the solution to \eqref{eq:impermeable-integral}, we use the fixed point argument. To this end, for any fixed $\sigma\in(0,\alpha)$, we define the Banach space $X_\sigma$ as
\[X_\sigma:=\left\{\phi\in C([1,\infty))~\Big|~\sup_{r\ge 1}\,e^{\sigma r}|\phi(r)|<+\infty\right\},\]
together with the norm
\[\|\phi\|_{X_\sigma}:=\sup_{r\ge 1}\,e^{\sigma r}|\phi(r)|.\]
Define the ball of radius $R$ in $X_\sigma$ as
\[B_R:=\{\phi\in X_\sigma~|~\|\phi\|_{X_\sigma}<R\}.\]
We start with the boundedness and Lipschitz continuity of the nonlinear functional $N(\phi)$.

\begin{lemma}\label{lem:Nphi}
	Let $\phi,\psi\in B_R$ with $R<\frac{\rho_+}{2}$. Then, there exists a constant $C_0$, which only depends on $\rho_+$, and is independent of $R$, such that
	\[\|N(\phi)\|_{X_\sigma}\le C_0\|\phi\|^2_{X_\sigma}\le C_0R^2,\quad \|N(\phi)-N(\psi)\|_{X_\sigma}\le C_0R\|\phi-\psi\|_{X_\sigma}.\]
\end{lemma}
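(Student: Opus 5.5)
The plan is a pointwise Taylor estimate for $h$ near $\rho_+$, followed by using the exponential weight once to absorb the extra factor of $\phi$.

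\emph{Step 1: keep $\rho_+ + \phi$ in a fixed compact interval.} For $\phi\in B_R$ and $r\ge1$ we have
\[|\phi(r)|\le e^{-\sigma r}\|\phi\|_{X_\sigma}< R<\frac{\rho_+}{2}.\]
Hence $\rho_++\phi(r)$, and every convex combination $\rho_++\psi(r)+\theta(\phi(r)-\psi(r))$ with $\theta\in[0,1]$ and $\psi\in B_R$, lies in $J:=[\rho_+/2,\,3\rho_+/2]$. This interval depends only on $\rho_+$. By \eqref{eq:h}, $h$ is smooth on $(0,\infty)$ for every $\gamma\ge1$. We set $M:=\sup_J|h''|$, which depends only on $\rho_+$ (and the fixed $\gamma$) and not on $R$.

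\emph{Step 2: the quadratic bound.} By Taylor's formula with integral remainder,
\[N(\phi)=\phi^2\int_0^1(1-\theta)\,h''(\rho_++\theta\phi)\,d\theta ,\qquad\text{so}\qquad |N(\phi(r))|\le \tfrac{M}{2}|\phi(r)|^2 .\]
We then multiply by $e^{\sigma r}$ and use $|\phi(r)|\le e^{-\sigma r}\|\phi\|_{X_\sigma}$ twice:
\[e^{\sigma r}|N(\phi(r))|\le \tfrac M2 e^{-\sigma r}\|\phi\|_{X_\sigma}^2\le \tfrac M2\|\phi\|^2_{X_\sigma},\]
since $e^{-\sigma r}\le1$. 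Taking the supremum over $r\ge1$ gives the first inequality, and $\|\phi\|_{X_\sigma}<R$ gives the bound $C_0R^2$.

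\emph{Step 3: the Lipschitz bound.} We write
\[N(\phi)-N(\psi)=(\phi-\psi)\int_0^1\Big[h'\big(\rho_++\psi+\theta(\phi-\psi)\big)-h'(\rho_+)\Big]d\theta .\]
By the mean value theorem on $J$, the bracket is bounded by $M\big(|\psi|+\theta|\phi-\psi|\big)\le M(|\phi|+|\psi|)\le 2MR\,e^{-\sigma r}$. Therefore
\[e^{\sigma r}|N(\phi)-N(\psi)|\le 2MR\,e^{-\sigma r}\,e^{\sigma r}|\phi-\psi|\le 2MR\|\phi-\psi\|_{X_\sigma}.\]
Taking $C_0:=2M$ covers both estimates.

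There is no real obstacle here. The only point requiring care is Step 1: the smallness $R<\rho_+/2$ ensures that all intermediate densities stay uniformly away from vacuum, where $h$ is singular when $\gamma=1$ or $\gamma<2$. This is what makes $C_0$ independent of $R$.
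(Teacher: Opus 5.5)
Your proof is correct and follows the same route as the paper. Both use a Taylor bound $|N(\phi)|\le C|\phi|^2$ and the representation $N(\phi)-N(\psi)=(\phi-\psi)\int_0^1 N'(\psi+\theta(\phi-\psi))\,d\theta$ with $N'(x)=h'(\rho_++x)-h'(\rho_+)$, on densities kept in $[\rho_+/2,3\rho_+/2]$ by $R<\rho_+/2$.

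One small fix in Step 3: the intermediate inequality $|\psi|+\theta|\phi-\psi|\le|\phi|+|\psi|$ fails for $\theta$ near $1$ (e.g.\ $\psi=-\phi\neq0$, $\theta=1$). Instead bound directly $|\psi+\theta(\phi-\psi)|=|(1-\theta)\psi+\theta\phi|\le(1-\theta)|\psi|+\theta|\phi|\le|\phi|+|\psi|$, which gives the same conclusion.
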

\begin{proof}
	Using the Taylor theorem, $N(\phi)$ can be written as
	\[N(\phi) = h(\phi+\rho_+)-h(\rho_+)-h'(\rho_+)\phi=\frac{h''(\rho_*)}{2}\phi^2,\]
	for some $\rho_*$ between $\rho_+$ and $\phi+\rho_+$. Specifically, depending on the value of $\gamma$, $N(\phi)$ can be represented as
	\[N(\phi) = \begin{cases}
		\frac{\gamma(\gamma-2)\rho_*^{\gamma-3}}{2}\phi^2,&\quad\mbox{if}\quad \gamma>1,\\
		-\frac{1}{2\rho_*^2}\phi^2,&\quad\mbox{if}\quad \gamma=1.
	\end{cases}\]
	If $\phi\in B_R$, we have for $r\ge1$ that  
	\[|\phi(r)|\le e^{\sigma r}|\phi(r)|\le \|\phi\|_{X_\sigma}\le R.\]
	Thus, if $R<\frac{\rho_+}{2}$, we have $\frac{\rho_+}{2}<\phi(r)+\rho_+<\frac{3\rho_+}{2}$ for all $r\ge1$. This implies that $\frac{\rho_+}{2}<\rho_*<\frac{3\rho_+}{2}$, and therefore $|N(\phi)|\le \frac{|h''(\rho_*)|}{2}|\phi|^2\le C|\phi|^2$ for some constant $C$ that only depends on $\rho_+$. Hence, 
	\[\|N(\phi)\|_{X_\sigma}=\sup_{r\ge 1}\,e^{\sigma r}|N(\phi(r))|\le \sup_{r\ge 1}\,Ce^{\sigma r}|\phi(r)|^2 \le C\|\phi\|^2_{X_\sigma}\le CR^2.\]
	To derive the second inequality, we first note that
	\[N(\phi)-N(\psi) = \int_0^1 \frac{d}{d\tau}N(\psi+\tau(\phi-\psi))\,d\tau=(\phi-\psi)\int_0^1 N'(\tau\phi+(1-\tau)\psi)\,d\tau,\]
	where 
	\[N'(\phi) = h'(\phi+\rho_+)-h'(\rho_+)=h''(\rho_{**})\phi\]
	for some $\rho_{**}$ between $\rho_+$ and $\phi+\rho_+$. Then, since $\phi,\psi\in B_R$, we have
	\begin{align}
		\begin{aligned}\label{Nphipsi}
		|N(\phi)-N(\psi)|&\le |\phi-\psi|\int_0^1 |N'(t\phi+(1-t)\psi)|\,dt\\
		&\le C|\phi-\psi|\int_0^1 |t\phi+(1-t)\psi|\,dt\le C(|\phi|+|\psi|)|\phi-\psi|\le CR|\phi-\psi|.
		\end{aligned}
	\end{align}
	Here, the constant $C$ only depends on $\rho_+$ as before. After multiplying $e^{\sigma r}$ on both sides of the inequality \eqref{Nphipsi}, and taking the supremum over $r\ge1$, the desired inequality for $\|N(\phi)-N(\psi)\|_{X_\sigma}$ follows.
\end{proof}
Now, for any $\phi\in X_\sigma$, we define the map $\mathcal{T}$ as
\[\mathcal{T}[\phi](r):=\phi_b(r)+\frac{1}{\kappa}\int_1^\infty G(r,s)N(\phi(s)) s^{n-1}\,ds.\]
Then, we show that for sufficiently small $\rho_b$, we can choose $R>0$ so that the map $\mathcal{T}$ is a contraction map from $B_R$ to $B_R$.

\begin{lemma}\label{lem:contraction_impermeable}
	There exists $\delta>0$ such that the following holds. If $|\rho_b|<\delta$, then, there exists $0<R<\frac{\rho_+}{2}$ such that  $\mathcal{T}$ maps $B_R$ into $B_R$. Furthermore, for any $\phi,\psi\in B_R$,
	\[\|\mathcal{T}[\phi]-\mathcal{T}[\psi]\|_{X_\sigma}\le \frac{1}{2}\|\phi-\psi\|_{X_\sigma}.\] 
\end{lemma}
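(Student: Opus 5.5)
We will prove the lemma by bounding the two pieces of $\mathcal{T}$ separately in $X_\sigma$. The key input is a weighted convolution estimate derived from Lemma \ref{lem:Green}.

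\textbf{Bound on the lifting function.} From \eqref{lifting} and the two-sided bound on $K_\nu$ used in the proof of Lemma \ref{lem:Green}, we have
\[|\phi_b(r)|\le \frac{|\rho_b|}{\alpha K_{\nu+1}(\alpha)}r^{-\nu}\frac{Ce^{-\alpha r}}{\sqrt{\alpha r}}.\]
Since $\sigma<\alpha$, it follows that $\|\phi_b\|_{X_\sigma}\le C_1|\rho_b|$, with $C_1$ depending on $n,\kappa,\rho_+$.

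\textbf{Weighted convolution estimate.} Next we estimate the integral operator on functions with $|f(s)|\le \|f\|_{X_\sigma}e^{-\sigma s}$. By Lemma \ref{lem:Green},
\[\Big|\int_1^\infty G(r,s)f(s)s^{n-1}ds\Big|\le \frac{C_*\|f\|_{X_\sigma}}{\alpha r^{(n-1)/2}}\int_1^\infty e^{-\alpha|r-s|}e^{-\sigma s}s^{(n-1)/2}\,ds.\]
The main point is to control the growth of $s^{(n-1)/2}$ against the prefactor $r^{-(n-1)/2}$. We split the integral into two ranges.

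On $s\le r$, we use $s^{(n-1)/2}\le r^{(n-1)/2}$. Then
\[\int_1^r e^{-\alpha(r-s)-\sigma s}\,ds\le \frac{e^{-\sigma r}}{\alpha-\sigma}.\]

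On $s\ge r$, we pick $\sigma'\in(\sigma,\alpha)$, for instance $\sigma'=(\sigma+\alpha)/2$, and use $s^{(n-1)/2}\le C_{n,\sigma'}e^{(\sigma'-\sigma)(s-r)}r^{(n-1)/2}$. This holds because $(s/r)^{(n-1)/2}\le(1+(s-r))^{(n-1)/2}$ for $r\ge1$. Then
\[\int_r^\infty e^{-\alpha(s-r)-\sigma s}e^{(\sigma'-\sigma)(s-r)}\,ds=e^{-\sigma r}\int_0^\infty e^{-(\alpha+\sigma-\sigma')t}\,dt\le \frac{e^{-\sigma r}}{\alpha}.\]
Combining the two ranges gives
\[\Big\|\int_1^\infty G(\cdot,s)f(s)s^{n-1}ds\Big\|_{X_\sigma}\le C_2\|f\|_{X_\sigma},\qquad C_2=C_2(n,\kappa,\rho_+,\sigma).\]
Continuity of $\mathcal{T}[\phi]$ follows from the continuity of $G$ and dominated convergence. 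This step, handling the polynomial weight so that $\sigma<\alpha$ leaves a spare exponential margin, is the only delicate point.

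\textbf{Conclusion.} By Lemma \ref{lem:Nphi}, for $\phi,\psi\in B_R$,
\[\|\mathcal{T}[\phi]\|_{X_\sigma}\le C_1|\rho_b|+\kappa^{-1}C_2C_0R^2,\qquad \|\mathcal{T}[\phi]-\mathcal{T}[\psi]\|_{X_\sigma}\le \kappa^{-1}C_2C_0R\,\|\phi-\psi\|_{X_\sigma}.\]
We fix
\[R<\min\Big\{\frac{\rho_+}{2},\ \frac{\kappa}{2C_0C_2}\Big\}.\]
Then the Lipschitz constant is at most $\tfrac12$, and
\[\|\mathcal{T}[\phi]\|_{X_\sigma}\le C_1|\rho_b|+\tfrac{R}{2}.\]
Finally we set $\delta=R/(2C_1)$, or slightly smaller to keep the inequality strict. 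Then $|\rho_b|<\delta$ gives $\|\mathcal{T}[\phi]\|_{X_\sigma}<R$, so $\mathcal{T}$ maps $B_R$ into $B_R$ and is a $\tfrac12$-contraction there.
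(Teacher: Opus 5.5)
Your proof is correct and follows the paper's argument: bound the lifting by $C_1|\rho_b|$, bound the Green-function integral in $X_\sigma$ by splitting at $s=r$ with Lemma \ref{lem:Green}, and close with Lemma \ref{lem:Nphi}. The only difference is that you fix $R$ independently of $\rho_b$ and take $\delta=R/(2C_1)$, while the paper chooses $R=2C_1|\rho_b|$, which directly gives the bound $\|\phi\|_{X_\sigma}\le 2C_1|\rho_b|$ used in Proposition \ref{prop:impermeable}; in your version this bound follows a posteriori from $\|\phi\|_{X_\sigma}\le C_1|\rho_b|+\tfrac12\|\phi\|_{X_\sigma}$ for the fixed point.

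There is one harmless slip on the range $s\ge r$. After substituting $t=s-r$ the exponent is $\alpha+2\sigma-\sigma'$, so the bound is $e^{-\sigma r}/(\alpha+2\sigma-\sigma')$, not $e^{-\sigma r}/\alpha$. The auxiliary $\sigma'$ is also unnecessary there: $e^{-(\alpha+\sigma)t}$ already absorbs $(1+t)^{(n-1)/2}$, as in the paper. The condition $\sigma<\alpha$ is only used on $s\le r$.
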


\begin{proof}
	We temporally choose $R<\frac{\rho_+}{2}$. Then, it follows from the definition of $\mathcal{T}$, $\phi_b$, Lemma \ref{lem:Green}, and Lemma \ref{lem:Nphi} that there exist positive constants $C_1$ and $C_2$ that only depend on $\alpha,\sigma$, and $\rho_+$ such that
	\begin{align*}
		e^{\sigma r}|\mathcal{T}[\phi](r)|&\le e^{\sigma r}|\phi_b(r)|+\frac{e^{\sigma r}}{\kappa}\left|\int_1^\infty G(r,s)N(\phi(s))s^{n-1}\,ds\right|\\
		&\le \frac{|\rho_b|e^{\sigma r}}{\alpha K_{\nu+1}(\alpha)}r^{-\nu} K_{\nu}(\alpha r) + \frac{C_*e^{\sigma r}}{\alpha\kappa}\int_1^\infty \frac{e^{-\alpha|r-s|}}{r^{(n-1)/2}s^{(n-1)/2}}|N\phi(s)|s^{n-1}\,ds\\
		&\le \frac{C_1|\rho_b|e^{-(\alpha-\sigma)r}}{r^{(n-1)/2}} +\frac{C_*\|N(\phi)\|_{X_\sigma}e^{\sigma r}}{\kappa r^{(n-1)/2}}\int_1^\infty e^{-\alpha|r-s|}e^{-\sigma s}s^{(n-1)/2}\,ds\\
		&\le C_1|\rho_b| + \frac{C_*C_0R^2e^{\sigma r}}{\kappa r^{(n-1)/2}}\left(\int_1^r e^{-\alpha(r-s)}e^{-\sigma s}s^{(n-1)/2}\,ds+\int_r^\infty e^{-\alpha(s-r)}e^{-\sigma s}s^{(n-1)/2}\,ds\right)\\
		&\le C_1|\rho_b| + \frac{C_*C_0R^2e^{\sigma r}}{ r^{(n-1)/2}}\left(r^{(n-1)/2}e^{-\alpha r}\int_1^r e^{(\alpha-\sigma)s}\,ds+e^{\alpha r}\int_r^\infty e^{-(\alpha+\sigma)s}s^{(n-1)/2}\,ds\right)\\
		&\le C_1|\rho_b| + \frac{C_*C_0R^2e^{\sigma r}}{ r^{(n-1)/2}}\left(\frac{r^{(n-1)/2}e^{-\sigma r}}{\alpha-\sigma} + C(\alpha,\sigma)e^{\alpha r}r^{(n-1)/2}e^{-(\alpha+\sigma)r}\right)\\
		&\le C_1|\rho_b| + C_2R^2.
	\end{align*}
	Here, we used the following estimate in the last inequality:
	\begin{align*}
		&e^{\alpha r}\int_r^\infty e^{-(\alpha+\sigma)s}s^{(n-1)/2}\,ds\\
		&=e^{\alpha r}\int_0^\infty e^{-(\alpha+\sigma)(r+t)}(r+t)^{(n-1)/2}\,dt\\
		&=e^{\alpha r}e^{-(\alpha+\sigma)r}r^{(n-1)/2}\int_0^\infty e^{-(\alpha+\sigma)t}\left(1+\frac{t}{r}\right)^{(n-1)/2}\,dt\\
		&\le e^{-\sigma r}r^{(n-1)/2}\int_0^\infty e^{-(\alpha+\sigma)t}\left(1+t\right)^{(n-1)/2}\,dt = C(\alpha,\sigma)e^{-\sigma r}r^{(n-1)/2}. 
	\end{align*}
	Therefore, taking supremum over $r\ge1$, we get
	\[\|\mathcal{T}[\phi]\|_{X_\sigma}\le C_1|\rho_b| + C_2R^2.\]
	Now, if $|\rho_b|$ is sufficiently small so that $|\rho_b|<\min\{\frac{\rho_+}{4C_1},\frac{1}{4C_1C_2}\}$, 
	we choose $R=2C_1|\rho_b|<\frac{\rho_+}{2}$. Then, for such $R$,
	\[\|\mathcal{T}[\phi]\|_{X_\sigma}\le C_1|\rho_b| + C_2R^2=C_1|\rho_b| + 4C_1^2C_2|\rho_b|^2<2C_1|\rho_b|=R.\]
	Hence, $\mathcal{T}$ maps $B_R$ into $B_R$. To show the contraction property of $\mathcal{T}$, we first note that for any $\phi,\psi\in B_R$,
	\[\mathcal{T}[\phi]-\mathcal{T}[\psi]=\frac{1}{\kappa}\int_1^\infty G(r,s)(N(\phi)-N(\psi))s^{(n-1)/2}\,ds.\]
	Then, using Lemma \ref{lem:Nphi}, we follow similar estimate as above to conclude that there exists a positive constant $C_3$ that only depends on $\alpha,\sigma$ and $\rho_+$ such that
	\[\|\mathcal{T}[\phi]-\mathcal{T}[\psi]\|_{X_\sigma}\le C_3R\|\phi-\psi\|_{X_\sigma}=2C_1C_3|\rho_b|\|\phi-\psi\|_{X_\sigma}.\]
	Thus, if $|\rho_b|$ is further sufficiently small so that $|\rho_b|<\frac{1}{4C_1C_3}$, we have $\|\mathcal{T}[\phi]-\mathcal{T}[\psi]\|_{X_\sigma}\le \frac{1}{2}\|\phi-\psi\|_{X_\sigma}$.
To sum up, if we choose $\delta$ as
\begin{equation}\label{smallness_impermeable}
	\delta =\frac{1}{4C_1} \min\left\{\rho_+,\frac{1}{C_2},\frac{1}{C_3}\right\},
\end{equation}
then for any $\rho_b$ satisfying $|\rho_b|<\delta$, the map $\mathcal{T}$ is a contraction map from $B_R$ to $B_R$ for $R=2C_1|\rho_b|$. 
\end{proof}

Therefore, if $|\rho_b|<\delta$ for $\delta$ defined in \eqref{smallness_impermeable}, there exists a unique fixed point $\phi\in B_{2C_1|\rho_b|}$ of $\mathcal{T}$, which satisfies the integral equation \eqref{eq:impermeable-integral}. This proves the existence of the solution to \eqref{eq:phi_impermeable}--\eqref{eq:phi_boundary}. Furthermore, since $\phi\in B_{2C_1|\rho_b|}$, we have
\[e^{\sigma r}|\phi(r)|\le \|\phi\|_{X_\sigma} \le 2C_1|\rho_b|,\quad r\ge 1,\]
which is the desired decay estimate of $\phi$. This completes the proof of Proposition \ref{prop:impermeable}.

\begin{remark}
When $n=3$, the modified Bessel function $I_\nu(z)$ and $K_\nu(z)$ with $\nu = \frac{n-2}{2}=\frac{1}{2}$ can be explicitly written as
\[I_{1/2}(z)=\sqrt{\frac{2}{\pi z}}\sinh z,\quad K_{1/2}(z) = \sqrt{\frac{\pi}{2z}}e^{-z}.\] 
This explicit formulae of the modified Bessel functions simplify the expression of the Green function and the lifting function. Precisely, the general solution to the modified Helmholtz equations are the linear combinations of $\frac{e^{\alpha r}}{r}$ and $\frac{e^{-\alpha r}}{r}$. In this case, we have
\[\phi_+(r) = e^{-\alpha}(\alpha+1)\frac{e^{\alpha r}}{r}+e^\alpha(\alpha-1)\frac{e^{-\alpha r}}{r},\quad \phi_-(r) = \frac{e^{-\alpha r}}{r},\]
and the Wronskian can be computed as
\[W(r) = \phi_+(r)\phi'_-(r)-\phi'_+(r)\phi_-(r)=-\frac{2\alpha(\alpha+1)e^{-\alpha}}{r^2}.\]
Hence, the Green function in three-dimensional case becomes
\[G(r,s) =-\frac{1}{2\alpha}\frac{e^{-\alpha|r-s|}}{rs}-\frac{\alpha-1}{2\alpha(\alpha+1)}\frac{e^{-\alpha(r+s-2)}}{rs}.\]
Furthermore, the lifting function can be simplified as
\[\phi_b(r) = -\frac{e^\alpha\rho_b}{(\alpha+1)}\frac{e^{-\alpha r}}{r}.\]
\end{remark}

\section{Existence of stationary solution for the inflow/outflow problems}\label{sec:inflow_outflow}
\setcounter{equation}{0}
In this section, we deal with the existence of the stationary solution $(\tilde{\rho},\tilde{u})$ to the inflow/outflow problems, proving Theorem \ref{thm:inflow_outflow}. We first divide \eqref{NSK_stationary}$_2$ by $\tilde{\rho}$, and then integrate it over $(r,\infty)$ to get
\begin{equation}\label{inflow_1}
	-\frac{\tilde{u}^2}{2}+h(\rho_+)-h(\tilde{\rho})=\mu\int_r^\infty \frac{1}{\tilde{\rho}}\left(\frac{(r^{n-1}\tilde{u})_r}{r^{n-1}}\right)_r\,dr-\kappa\left(\tilde{\rho}_{rr}+\frac{n-1}{r}\tilde{\rho}_r\right).
\end{equation}
As in \eqref{tildeu}, $\tilde{u}$ can be represented in terms of $\tilde{\rho}$ as  
\[\tilde{u}(r) = \frac{\tilde{\rho}(1)u_-}{\tilde{\rho}(r)r^{n-1}}.\]
After substituting it into \eqref{inflow_1} and taking integration by parts, we derive the following integro-differential equation for $\tilde{\rho}$:
\begin{equation}\label{inflow_2}
	\kappa\left(\tilde{\rho}_{rr}+\frac{n-1}{r}\tilde{\rho}_r\right) =\frac{\mu \tilde{\rho}(1)u_-}{r^{n-1}\tilde{\rho}^3}\tilde{\rho}_r+ h(\tilde{\rho})-h(\rho_+)+\frac{\tilde{\rho}(1)^2u_-^2}{2r^{2(n-1)}\tilde{\rho}^2}-\mu \tilde{\rho}(1)u_-\int_r^{\infty}\frac{\tilde{\rho}_r(s)^2}{s^{n-1}\tilde{\rho}(s)^4}\,ds,
\end{equation}
subject to the boundary conditions
\begin{equation*}
	\lim_{r\to\infty}\tilde{\rho}(r)=\rho_+,\quad \tilde{\rho}_r(1)=\rho_b.
\end{equation*}
Note that, when $u_-=0$, the equation \eqref{inflow_2} reduces to \eqref{impermeable_2} which is the equation for the impermeable wall problem. Now, we rewrite the equation \eqref{inflow_2} in terms of the perturbation $\phi:=\tilde{\rho}-\rho_+$ as

\begin{align}
	\begin{aligned}\label{eq:phi_inflow_outflow}
		&\phi_{rr} + \frac{n-1}{r}\phi_r -\frac{h'(\rho_+)}{\kappa}\phi\\
		&=\frac{1}{\kappa}\Bigg(\frac{\mu (\phi(1)+\rho_+)u_-\phi_r}{r^{n-1}(\phi+\rho_+)^3}+(h(\phi+\rho_+)-h(\rho_+)-h'(\rho_+)\phi)\\
		&\hspace{0.5cm}+\frac{(\phi(1)+\rho_+)^2u_-^2}{2r^{2(n-1)}(\phi+\rho_+)^2}-\mu (\phi(1)+\rho_+)u_-\int_r^\infty \frac{\phi_r^2(s)}{s^{n-1}(\phi(s)+\rho_+)^4}\,ds\Bigg)=:\frac{1}{\kappa}\left(S(r) + N(\phi)\right),
	\end{aligned}
\end{align}
where $S(r)$ is the source term that is independent of $\phi$, which is
\begin{equation}\label{def:S_inflow_outflow}
	S(r):=\frac{u_-^2}{2r^{2(n-1)}},
\end{equation}
while $N(\phi)$ is the nonlinear terms defined as
\begin{align}
	\begin{aligned}\label{def:N_inflow_outflow}
		N(\phi) &= \frac{\mu (\phi(1)+\rho_+)u_-\phi_r}{r^{n-1}(\phi+\rho_+)^3}+(h(\phi+\rho_+)-h(\rho_+)-h'(\rho_+)\phi)\\
		&\quad + \frac{u_-^2}{2r^{2(n-1)}}\left(\frac{(\phi(1)+\rho_+)^2}{(\phi+\rho_+)^2}-1\right)-\mu (\phi(1)+\rho_+)u_-\int_r^\infty \frac{\phi_r^2(s)}{s^{n-1}(\phi(s)+\rho_+)^4}\,ds.
	\end{aligned}
\end{align}
Again, the boundary conditions for $\phi$ are imposed as
\begin{equation}\label{eq:phi_boundary_inflow_outflow}
	\lim_{r\to\infty}\phi(r)=0,\quad \phi_r(1)=\rho_b.
\end{equation}

\begin{proposition}\label{prop:inflow}
	There exists a positive constant $\delta$ and $C=C(n,\kappa\rho_+)$ such that the following holds. If $|\rho_b|,|u_-|<\delta$, then there exists a unique smooth solution $\phi$ to \eqref{eq:phi_inflow_outflow}--\eqref{eq:phi_boundary_inflow_outflow} satisfying
	\[|\phi(r)|\le C(|\rho_b|+|u_-|^2)r^{-2(n-1)},\quad |\phi_r(r)|\le C(|\rho_b|+|u_-|^2)r^{-(2n-1)},\quad r\ge1.\]
\end{proposition}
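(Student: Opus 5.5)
The plan is to rerun the fixed point argument of Section \ref{sec:impermeable}, now in an algebraically weighted space that also controls $\phi_r$. Using the Green function $G$ and the lifting $\phi_b$ from \eqref{lifting}, I would rewrite \eqref{eq:phi_inflow_outflow}--\eqref{eq:phi_boundary_inflow_outflow} as
\[
\phi(r)=\phi_b(r)+\frac{1}{\kappa}\int_1^\infty G(r,s)\bigl(S(s)+N(\phi)(s)\bigr)s^{n-1}\,ds=:\mathcal{T}[\phi](r).
\]
The map would act on
\[
Y:=\Bigl\{\phi\in C^1([1,\infty))~\Big|~\|\phi\|_Y:=\sup_{r\ge1}\bigl(r^{2(n-1)}|\phi(r)|+r^{2n-1}|\phi_r(r)|\bigr)<\infty\Bigr\},
\]
restricted to the ball $B_R$ with $R<\rho_+/2$. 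The derivative has to be in the norm because $N(\phi)$ contains $\phi_r$ and $\int_r^\infty\phi_r^2$. Differentiating $\mathcal{T}[\phi]$ in $r$ is allowed: $G$ is continuous across $s=r$, so no boundary term appears, and Lemma \ref{lem:Green} then controls $\partial_r G$.

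The core step is a linear weighted estimate. I want to show that if $|F(s)|\le A s^{-2(n-1)}$, then
\[
\Bigl|\int_1^\infty G(r,s)F(s)s^{n-1}ds\Bigr|\le CAr^{-2(n-1)},\qquad \Bigl|\int_1^\infty \pa_rG(r,s)F(s)s^{n-1}ds\Bigr|\le CAr^{-2(n-1)}.
\]
By Lemma \ref{lem:Green}, both reduce to bounding $r^{-(n-1)/2}\int_1^\infty e^{-\alpha|r-s|}s^{(n-1)/2-2(n-1)}ds$. I would split the integral at $s=r/2$. For $s\ge r/2$ the weight is comparable to $r^{-3(n-1)/2}$ and the exponential integrates to $O(1)$. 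For $s<r/2$ we have $e^{-\alpha|r-s|}\le e^{-\alpha r/2}$, which beats any power of $r$. Together these give $r^{-2(n-1)}$.

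This bound does not directly give the $r^{-(2n-1)}$ decay of $\phi_r$, and I expect this to be the main obstacle. I would handle it with the equation rather than the Green function. Since $(r^{n-1}\phi_r)_r=r^{n-1}\bigl(\alpha^2\phi+\kappa^{-1}(S+N)\bigr)$, I can write $\phi_r(r)=-r^{1-n}\int_r^\infty s^{n-1}(\alpha^2\phi+\kappa^{-1}(S+N))\,ds$. This formula needs $r^{n-1}\phi_r\to0$, which the $\partial_rG$ bound provides. However, it only yields $r^{-(n-1)}$ decay, which is too weak. The better route is to write $\phi_r=\bigl(\kappa^{-1}(S+N)-\phi_{rr}-\tfrac{n-1}{r}\phi_r\bigr)/\alpha^2$ and differentiate once more. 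Concretely, set $\psi=\phi_r$ and differentiate the equation to get
\[
\psi_{rr}+\tfrac{n-1}{r}\psi_r-\bigl(\alpha^2+\tfrac{n-1}{r^2}\bigr)\psi=\kappa^{-1}(S+N)_r .
\]
Here $S_r\sim r^{-(2n-1)}$. Applying the same weighted-decay estimate to this operator, with a comparison/maximum principle since the potential is positive, gives $|\psi|\lesssim r^{-(2n-1)}$ provided $N(\phi)_r$ decays at that rate. If this becomes too heavy, I would fall back on a direct argument: if $|F_r|\lesssim r^{-(2n-1)}$, then integrating by parts in $s$ inside $\int\partial_rG\,F$ shows that the derivative gains a power of $r$ at large $r$, because the local part of the kernel behaves like $\tfrac{1}{2}\mathrm{sgn}(s-r)e^{-\alpha|r-s|}$ up to $O(1/r)$ corrections. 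That would mean adding $\sup r^{2n-1}|\phi_{rr}|$ or similar to the norm.

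The nonlinear bounds are then checked term by term. The quadratic part $h(\phi+\rho_+)-h(\rho_+)-h'(\rho_+)\phi$ is $O(|\phi|^2)=O(R^2r^{-4(n-1)})$. The term $u_-\phi_r r^{1-n}$ is $O(\delta R\,r^{-(3n-2)})$. The factor $u_-^2r^{-2(n-1)}(\cdots-1)$ is $O(\delta^2R\,r^{-2(n-1)})$. The integral term is $O(\delta R^2r^{-(5n-4)})$. All of these are bounded by $(\delta+R)R\,r^{-2(n-1)}$, and their Lipschitz differences satisfy the same bounds with $R$ replaced by $\|\phi-\psi\|_Y$; the dependence on $\phi(1)$ is harmless since $|\phi(1)|\le\|\phi\|_Y$. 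The data contribute $\|\phi_b\|_Y\le C|\rho_b|$, which holds with room to spare by exponential decay, and the source term contributes $\|\int GS\|_Y\le C|u_-|^2$.

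Choosing $R=2C(|\rho_b|+|u_-|^2)$ with $\delta$ small then makes $\mathcal{T}$ a contraction of $B_R$, exactly as in Lemma \ref{lem:contraction_impermeable}. The fixed point gives existence and uniqueness in $B_R$ together with the stated decay. Smoothness follows by bootstrapping the ODE, and the bounds are uniform in bounded $\mu$.
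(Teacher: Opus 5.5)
Your scheme is the paper's: the same lifting $\phi_b$, the same map $\mathcal{T}$ on a ball $B_R\subset Y$, the same term-by-term bounds on $N(\phi)$, the same estimate $\int_1^\infty e^{-\alpha|r-s|}s^{-k}\,ds\le Cr^{-k}$, and the same choice $R=2C(|\rho_b|+|u_-|^2)$. You part ways only at the weighted bound $r^{2n-1}|\partial_r\mathcal{T}[\phi](r)|\lesssim |\rho_b|+|u_-|^2+(\mu|u_-|+R+u_-^2)R$. The paper simply asserts that this follows from the pointwise bound on $\partial_rG$ in Lemma \ref{lem:Green}. You are right that it does not. That bound discards the sign change of $\partial_rG$ across $s=r$ and yields only $|\partial_r\mathcal{T}[\phi]|\lesssim r^{-2(n-1)}$. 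So your extra step is genuinely needed, and on this point your proposal is more careful than the paper's proof.

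As written, however, your main route does not close inside $Y$. The forcing of the differentiated equation is $\kappa^{-1}(S+N(\phi))_r$. Differentiating the term $\mu(\phi(1)+\rho_+)u_-\phi_r r^{1-n}(\phi+\rho_+)^{-3}$ produces $\phi_{rr}$, which is neither controlled nor even defined for a general $\phi\in Y$. If you enlarge the norm, the workable weight on $\phi_{rr}$ is $r^{2(n-1)}$, not $r^{2n-1}$. The reason is that $\partial_r^2\mathcal{T}[\phi]=\alpha^2\mathcal{T}[\phi]+\kappa^{-1}(S+N(\phi))-\frac{n-1}{r}\partial_r\mathcal{T}[\phi]$ is a priori only $O(r^{-2(n-1)})$.

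The cleaner fix is to separate the two issues.

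\emph{Step 1 (contraction).} Run the contraction in the weaker space normed by $\sup_{r\ge1}r^{2(n-1)}(|\phi|+|\phi_r|)$. This is exactly what Lemma \ref{lem:Green} delivers. Lemma \ref{lem:Nphi_inflow} survives because $n\ge2$: the $\mu$-term is $O(\mu|u_-|Rr^{-3(n-1)})$, and the integral term is $O(\mu|u_-|R^2r^{-(5n-6)})$.

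\emph{Step 2 (upgrade).} The fixed point is $C^3$ by bootstrapping, so your barrier argument applies to it. For $L\psi:=\psi_{rr}+\frac{n-1}{r}\psi_r-(\alpha^2+\frac{n-1}{r^2})\psi$, one computes $L(r^{-(2n-1)})=(2n^2r^{-2}-\alpha^2)r^{-(2n-1)}\le-\frac{\alpha^2}{2}r^{-(2n-1)}$ for $r\ge r_0:=\max\{1,2n/\alpha\}$. The forcing is $O((|\rho_b|+u_-^2)r^{-(2n-1)})$: the equation itself now gives $\phi_{rr}=O(r^{-2(n-1)})$, and $r^{1-n}r^{-2(n-1)}\le r^{-(2n-1)}$ for $n\ge2$. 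The value $|\phi_r(r_0)|$ is controlled by the weak bound. The maximum principle on $[r_0,\infty)$, with $\phi_r\to0$ at infinity, then gives $|\phi_r|\le C(|\rho_b|+u_-^2)r^{-(2n-1)}$. Uniqueness now holds even in the larger ball of the weaker space.

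Your fallback of integrating by parts against the nearly odd local part of $\partial_rG$ would also work. It runs into the same $\phi_{rr}$ issue unless you treat the $\mu$-term, which already decays like $r^{-3(n-1)}$, as a remainder rather than differentiating it.
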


Unlike the impermeable wall problem, we cannot expect the exponential decay of $\phi$, due to the algebraic decaying source term $S(r)$. Therefore, we define a new Banach space $Y$ as
\[Y:=\left\{\phi\in C^1([1,\infty))~\Big|~\sup_{r\ge 1}\,\left(r^{2(n-1)}|\phi(r)|+r^{2n-1}|\phi_r(r)|\right)<+\infty\right\},\]
where the norm on $Y$ is defined as
\[\|\phi\|_Y:=\sup_{r\ge 1}\,\left(r^{2(n-1)}|\phi(r)|+r^{2n-1}|\phi_r(r)|\right).\]

Similar to the impermeable wall problem, we choose the same lifting function $\phi_b(r)$ defined in \eqref{lifting} to get rid of the boundary condition $\phi_r(1) = \rho_b$. Then, according to the observation in the previous section, the solution to \eqref{eq:phi_inflow_outflow} is a fixed point of the following map:
\[\mathcal{T}[\phi](r) = \phi_b(r)+\frac{1}{\kappa}\int_1^\infty G(r,s)\left(S(s)+N(\phi(s))\right)s^{n-1}\,ds.\]
As before, our goal is to show that $\mathcal{T}$ is self-containing map from suitable ball in $Y$ into itself, and it is indeed contraction map. To this end, we again define the ball of radius $R$ in $Y$ as $B_R$:
\[B_R:=\{\phi\in Y~|~\|\phi\|_Y<R\}.\]
To estimate the contraction property of $\mathcal{T}$, we first need to control the new nonlinear term $N(\phi)$.

\begin{lemma}\label{lem:Nphi_inflow}
	Let $\phi, \psi\in B_R$ with $R<\frac{\rho_+}{2}$. 
	Then, there exists a constant $C_0$, which only depends on $\rho_+$, and is independent of $R$, such that
	\[|N(\phi)|\le \frac{C_0(\mu|u_-|+R+u_-^2)}{r^{2(n-1)}}\|\phi\|_Y,\quad |N(\phi)-N(\psi)|\le \frac{C_0(\mu|u_-|+R+u_-^2)}{r^{2(n-1)}}\|\phi-\psi\|_Y.\]
\end{lemma}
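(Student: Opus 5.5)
The plan is to split $N(\phi)$ into its four pieces,
\[N(\phi)=N_1+N_2+N_3+N_4,\]
ordered as in \eqref{def:N_inflow_outflow}: the $\mu u_-\phi_r$ term, the Taylor remainder of $h$, the $u_-^2$ correction, and the $\mu u_-$ integral term. We bound each piece pointwise by $r^{-2(n-1)}$ times the appropriate small factor times $\|\phi\|_Y$.

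\textbf{Pointwise bounds available from $\phi\in B_R$.} Membership in $B_R$ gives $|\phi(r)|\le \|\phi\|_Y r^{-2(n-1)}\le R$ and $|\phi_r(r)|\le \|\phi\|_Y r^{-(2n-1)}\le R$. Since $R<\rho_+/2$, we have $\rho_+/2<\phi+\rho_+<3\rho_+/2$, and the same holds for $\phi(1)+\rho_+$. Every denominator $(\phi+\rho_+)^k$ is therefore bounded above and below by constants depending only on $\rho_+$.

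\textbf{Bounds on the four pieces.}
\begin{itemize}
\item $N_1$: we have $|N_1|\le C\mu|u_-|\,r^{-(n-1)}|\phi_r|\le C\mu|u_-|\,\|\phi\|_Y\, r^{-(n-1)-(2n-1)}$, which is at most $C\mu|u_-|\,\|\phi\|_Y\, r^{-2(n-1)}$.
\item $N_2$: Lemma \ref{lem:Nphi}'s Taylor argument gives $|N_2|\le C|\phi|^2\le CR|\phi|\le CR\|\phi\|_Y r^{-2(n-1)}$.
\item $N_3$: write $\frac{(\phi(1)+\rho_+)^2}{(\phi+\rho_+)^2}-1=\frac{(\phi(1)-\phi)(\phi(1)+\phi+2\rho_+)}{(\phi+\rho_+)^2}$, whose absolute value is at most $C(|\phi(1)|+|\phi(r)|)\le C\|\phi\|_Y$. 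Hence $|N_3|\le Cu_-^2\|\phi\|_Y r^{-2(n-1)}$.
\item $N_4$: since $|\phi_r(s)|^2\le R\|\phi\|_Y s^{-2(2n-1)}$, we get
\[\int_r^\infty \frac{\phi_r^2}{s^{n-1}(\phi+\rho_+)^4}\,ds\le CR\|\phi\|_Y\int_r^\infty s^{-(5n-3)}\,ds\le CR\|\phi\|_Y r^{-(5n-4)}.\]
As $5n-4\ge 2(n-1)$, this gives $|N_4|\le C\mu|u_-|R\|\phi\|_Y r^{-2(n-1)}$. Using $R<\rho_+/2$, this is at most $C\mu|u_-|\,\|\phi\|_Y r^{-2(n-1)}$ with a constant depending only on $\rho_+$.
\end{itemize}
Summing the four bounds yields the first estimate.

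\textbf{Lipschitz estimate.} We treat each piece as a difference, adding and subtracting the intermediate terms.
\begin{itemize}
\item $N_1$: the coefficient $\frac{\phi(1)+\rho_+}{(\phi+\rho_+)^3}$ is Lipschitz in $(\phi(1),\phi)$ with constant depending on $\rho_+$. Its difference for $\phi$ and $\psi$ is therefore bounded by $C\|\phi-\psi\|_Y$, and it multiplies $|\phi_r|\le R r^{-(2n-1)}$. The difference $\phi_r-\psi_r$ multiplies a bounded coefficient. Both contributions are bounded by $C\mu|u_-|\|\phi-\psi\|_Y r^{-2(n-1)}$; we absorb $R$ into the constant since $R<\rho_+/2$.
\item $N_2$: this is exactly \eqref{Nphipsi}, giving $CR|\phi-\psi|\le CR\|\phi-\psi\|_Y r^{-2(n-1)}$.
\item $N_3$: the map $(a,b)\mapsto (a+\rho_+)^2/(b+\rho_+)^2$ is smooth on the relevant compact set, so the difference is at most $C(|\phi(1)-\psi(1)|+|\phi-\psi|)\le C\|\phi-\psi\|_Y$. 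This gives $Cu_-^2\|\phi-\psi\|_Y r^{-2(n-1)}$.
\item $N_4$: we use $\phi_r^2-\psi_r^2=(\phi_r-\psi_r)(\phi_r+\psi_r)$, together with the Lipschitz dependence of the weights $(\phi(1)+\rho_+)$ and $(\phi+\rho_+)^{-4}$. Each resulting integrand is bounded by $CR\|\phi-\psi\|_Y s^{-(5n-3)}$, which gives the same decay as before.
\end{itemize}

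\textbf{Main obstacle.} The only delicate point is keeping the constants independent of $R$ and $\mu$ in the $\mu$-terms. This works because every occurrence of $\phi$ there is either paired with the factor $\mu|u_-|$ or absorbed using $R<\rho_+/2$. The other point requiring care is the nonlocal dependence through $\phi(1)$, which is controlled by $\|\cdot\|_Y$ evaluated at $r=1$.
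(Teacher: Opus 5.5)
Your proposal is correct and follows essentially the same route as the paper. It uses the same four-term splitting of $N(\phi)$ (the paper's $I_1,\dots,I_4$ in the Lipschitz part), the same pointwise bounds $|\phi|\le\|\phi\|_Y r^{-2(n-1)}$ and $|\phi_r|\le\|\phi\|_Y r^{-(2n-1)}$, the same $\int_r^\infty s^{-(5n-3)}\,ds$ bound, and the same absorption of factors like $1+R$ via $R<\rho_+/2$; your difference-of-squares identity for the $u_-^2$ term and Lipschitz-weight treatment of the $\mu$-terms just make the paper's add-and-subtract steps explicit.
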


\begin{proof}
	First, note that if $\phi\in B_R$ with $R<\frac{\rho_+}{2}$, we have $\frac{\rho_+}{2}\le\phi+\rho_+\le \frac{3\rho_+}{2}$.	Thus, it follows from the definition of $N(\phi)$ in \eqref{def:N_inflow_outflow} that for any $\phi\in B_R$ and $r\ge1$,
	\begin{align*}
		&|N(\phi(r))|\\
		&\quad\le C_0\left(\mu |u_-|r^{-(n-1)} |\phi_r| + |\phi|^2 + |u_-|^2r^{-2(n-1)}|\phi(1)-\phi(r)|+ \mu |u_-|\int_r^\infty \frac{|\phi^2_r(s)|}{s^{n-1}}\,ds\right)\\
		&\quad\le C_0\left(\mu |u_-| r^{-(3n-2)}\|\phi\|_Y + r^{-4(n-1)}\|\phi\|_Y^2 + |u_-|^2r^{-2(n-1)}\|\phi\|_Y + \mu |u_-|\|\phi\|_Y^2\int_r^\infty s^{-(5n-3)}\,ds\right)\\
		&\quad\le C_0(\mu |u_-|(1+R)+R+|u_-|^2)r^{-2(n-1)}\|\phi\|_Y\\
		&\quad<C_0(\mu |u_-|+R+|u_-|^2)r^{-2(n-1)}\|\phi\|_Y,
	\end{align*}
	where we use
	\[|\phi(1)-\phi(r)|\le |\phi(1)|+|\phi(r)|\le (1+r^{-2(n-1)})\|\phi\|_Y\le 2\|\phi\|_Y,\] 
	and $1+R<1+\frac{\rho_+}{2}$, and $C_0$ is the constant only depend on $\rho_+$.
	Similarly, for any $\phi,\psi\in B_R$, we have
	
	\begin{align*}
		|N(\phi(r))-N(\psi(r))|&\le \frac{\mu|u_-|}{r^{n-1}}\left|\frac{(\phi(1)+\rho_+)\phi_{r}}{(\phi(r)+\rho_+)^3}-\frac{(\psi(1)+\rho_+)\psi_{r}}{(\psi(r)+\rho_+)^3}\right|\\
		&\quad +|(h(\phi+\rho_+)-h(\rho_+)-h'(\rho_+)\phi)-(h(\psi+\rho_+)-h(\rho_+)-h'(\rho_+)\psi)|\\
		&\quad + \frac{u_-^2}{2r^{2(n-1)}}\left|\frac{(\phi(1)+\rho_+)^2}{(\phi+\rho_+)^2}-\frac{(\psi(1)+\rho_+)^2}{(\psi+\rho_+)^2}\right|\\
		&\quad +\mu|u_-|\int_r^{\infty}\left|\frac{(\phi(1)+\rho_+)\phi_{r}^2(s)}{(\phi(s)+\rho_+)^4}-\frac{(\psi(1)+\rho_+)\psi_{r}^2(s)}{(\psi(s)+\rho_+)^4}\right|\frac{1}{s^{n-1}}\,ds\\
		&=\sum_{i=1}^4I_{i}.
	\end{align*}
	We estimate $I_1$ as
	\begin{align*}
		I_1&\le \frac{C_0\mu|u_-|}{r^{n-1}}\left(\left|\phi(1)-\psi(1)\right||\phi_{r}|+|\phi_{r}-\psi_{r}|+|\psi_{r}||\phi(r)-\psi(r)|\right)\\
		&\le \frac{C_0\mu|u_-|}{r^{n-1}}\left(\|\phi-\psi\|_Y\|\phi\|_Yr^{-(2n-1)}+\|\phi-\psi\|_Yr^{-(2n-1)}+\|\psi\|_Yr^{-(4n-3)}\|\phi-\psi\|_Y\right)\\
		&\le C_0\mu|u_-|r^{-(3n-2)}\|\phi-\psi\|_Y.
	\end{align*}
	The estimate of $I_2$ is the same as in Lemma \ref{lem:Nphi}, and therefore
	\[I_2\le C_0R|\phi-\psi|\le C_0Rr^{-2(n-1)}\|\phi-\psi\|_Y.\]
	For $I_3$, we have
	\begin{align*}
		I_3&\le \frac{C_0u_-^2}{r^{2(n-1)}}\left(|\phi(1)-\psi(1)|+|\phi(r)-\psi(r)|\right)\le \frac{C_0u_-^2}{r^{2(n-1)}}\|\phi-\psi\|_Y.
	\end{align*}
	Finally, we estimate $I_4$ as
	\begin{align*}
		I_4&\le C_0\mu|u_-|\int_r^\infty \big(|\phi(1)-\psi(1)||\phi^2_{r}(s)|\\
		&\hspace{3cm}+|\phi_{r}(s)+\psi_{r}(s)||\phi_{r}(s)-\psi_{r}(s)|+|\psi_{r}(s)|^2|\phi(s)-\psi(s)|\big)\frac{1}{s^{n-1}}\,ds\\
		&\le C_0\mu|u_-|\int_r^{\infty}\left(\|\phi-\psi\|_YR^2s^{-(4n-2)}+R\|\phi-\psi\|_Ys^{-(4n-2)}+\|\psi\|^2_Y\|\phi-\psi\|_Y s^{-(6n-4)}\right)\frac{1}{s^{n-1}}\,ds\\
		&\le C_0\mu|u_-|\|\phi-\psi\|_Y\left(R+R^2\right)\int_r^\infty s^{-(5n-3)} \,ds\le C_0\mu |u_-|\|\phi-\psi\|_Y r^{-(5n-4)}.
	\end{align*}
	Thus, we combine the estimates on $I_i$ for $i=1,2,3,4$ to obtain
	\begin{align*}
		|N(\phi(r))-N(\psi(r))|\le C_0\left(\mu|u_-|+R+u_-^2\right)r^{-2(n-1)}\|\phi-\psi\|_Y.
	\end{align*}
\end{proof}

Now, as we have done in the impermeable wall problem, we will show that for sufficiently small $|\rho_b|$ and $|u_-|$, we may find $R$ so that the map $\mathcal{T}$ is a contraction mapping from $B_R$ to $B_R$. 
\begin{lemma}\label{lem:T_contraction}
There exists $\delta>0$ such that the following holds. If $|\rho_b|,|u_-|<\delta$, there exists $0<R<\frac{\rho_+}{2}$ such that $\mathcal{T}$ maps $B_R$ into $B_R$. Furthermore, for any $\phi,\psi\in B_R$,
\[\|\mathcal{T}[\phi]-\mathcal{T}[\psi]\|_{Y}\le \frac{1}{2}\|\phi-\psi\|_Y.\]
\end{lemma}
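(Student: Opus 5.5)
The plan is to mimic Lemma \ref{lem:contraction_impermeable}, now in the algebraically weighted space $Y$. The main new ingredient is a \emph{convolution estimate}: for $\beta\in\{2(n-1)\}$ and $s\ge1$,
\[\frac{1}{\kappa}\int_1^\infty |G(r,s)|\,s^{-\beta}s^{n-1}\,ds\le C r^{-\beta},\qquad \frac{1}{\kappa}\int_1^\infty |\pa_rG(r,s)|\,s^{-\beta}s^{n-1}\,ds\le C r^{-\beta}.\]
By Lemma \ref{lem:Green}, both integrands are bounded by $C\,r^{-(n-1)/2}e^{-\alpha|r-s|}s^{(n-1)/2-\beta}$. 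I would split the integral into $s\in[1,r/2]$, $s\in[r/2,2r]$ and $s\ge 2r$.
\begin{itemize}
\item On $[r/2,2r]$ the weight satisfies $s^{(n-1)/2-\beta}\sim r^{(n-1)/2-\beta}$ and $\int e^{-\alpha|r-s|}ds\le 2/\alpha$. This part gives $Cr^{-\beta}$.
\item On $[1,r/2]$ we have $e^{-\alpha|r-s|}\le e^{-\alpha r/2}$, and a polynomial factor times $e^{-\alpha r/2}$ is $\le C r^{-\beta}$.
\item On $s\ge 2r$ we have $e^{-\alpha(s-r)}\le e^{-\alpha s/2}$, and again the contribution is exponentially small.
\end{itemize}

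The difficulty is that the norm of $Y$ asks for $r^{2n-1}|\phi_r|$, which is one power better than $r^{2(n-1)}$, and the kernel bound on $\pa_rG$ does not by itself give this extra decay. I would get it from the equation itself. Write $\Phi=\frac1\kappa\int G(S+N)s^{n-1}ds$, which solves
\[(r^{n-1}\Phi_r)_r=r^{n-1}\bigl(\alpha^2\Phi+\kappa^{-1}(S+N)\bigr),\qquad \Phi_r(1)=0.\]
Alternatively, I would use that $\Phi_r$ is given by an integral involving $\pa_rG$ and integrate by parts. The cleanest route is to use $\alpha^2\Phi-\Phi_{rr}-\frac{n-1}{r}\Phi_r=-f$ together with the refined observation that $f=\kappa^{-1}(S+N)$ has derivative decaying like $r^{-(2n-1)}$. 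Differentiating the equation shows that $\Phi_r$ solves a similar Helmholtz-type equation with source $f_r$ plus a term $\frac{n-1}{r^2}\Phi_r$. Applying the convolution estimate with $\beta=2n-1$ then gives $|\Phi_r|\le Cr^{-(2n-1)}$.

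This requires controlling $N(\phi)_r$, and in particular the term $\phi_{rr}$ appearing through $\phi_r$ in the $\mu$-term. I expect this derivative bookkeeping to be the main obstacle. As a fallback, I would instead check whether the derivative bound follows from a direct expansion of $\pa_rG$: the leading parts of $I_\nu$ and $K_\nu$ cancel to higher order against a smooth, slowly varying source. Concretely, one integrates by parts in $s$ using $e^{\pm\alpha s}$ and gains a factor $1/s$ from the derivative of $s^{-\beta}$, so the error is one order better.

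Granting these bounds, the argument closes as before. The lifting function satisfies $\|\phi_b\|_Y\le C_1|\rho_b|$, since $\phi_b$ and $\phi_b'$ decay exponentially. Since $|S|\le u_-^2r^{-2(n-1)}$, Lemma \ref{lem:Nphi_inflow} gives
\[\|\mathcal T[\phi]\|_Y\le C_1|\rho_b|+C_2u_-^2+C_2(\mu|u_-|+R+u_-^2)R,\]
\[\|\mathcal T[\phi]-\mathcal T[\psi]\|_Y\le C_2(\mu|u_-|+R+u_-^2)\|\phi-\psi\|_Y.\]
Choose $R=2(C_1|\rho_b|+C_2u_-^2)$, and take $\delta$ small so that $R<\rho_+/2$ and $C_2(\mu\delta+R+\delta^2)\le 1/2$. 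Then $\mathcal T$ maps $B_R$ into $B_R$ and is a $\frac12$-contraction, as claimed.
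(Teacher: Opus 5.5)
Your handling of the $r^{2(n-1)}|\mathcal{T}[\phi]|$ component (lifting function, $|S|\le u_-^2r^{-2(n-1)}$, Lemma~\ref{lem:Nphi_inflow}, three-region convolution estimate) is the same as the paper's, and your final bookkeeping is fine.

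The gap is the other half of the $Y$-norm. The bound $r^{2n-1}|\partial_r\mathcal{T}[\phi](r)|\le C$, and its analogue for $\mathcal{T}[\phi]-\mathcal{T}[\psi]$, is the entire difficulty, and you leave it ``granted''. Both routes you sketch need $\partial_r(S+N(\phi))=O(r^{-(2n-1)})$, and this is not available for $\phi\in Y$:
the term $\mu(\phi(1)+\rho_+)u_-\phi_r/(r^{n-1}(\phi+\rho_+)^3)$ of $N(\phi)$ has a derivative containing $\phi_{rr}$, which $\|\cdot\|_Y$ does not control.
In addition, differentiating the equation turns $\Phi_r(1)=0$ into a Dirichlet condition for $\Phi_r$, so the Neumann Green function $G$ no longer represents $\Phi_r$.

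Your diagnosis is right, and the paper does not resolve this point either. It asserts the derivative bound ``using the pointwise estimate of the derivative of the Green function in Lemma~\ref{lem:Green}''. But $|\partial_rG|$ has the same envelope $e^{-\alpha|r-s|}(rs)^{-(n-1)/2}$ as $\alpha|G|$. So already for the source $S$, that argument only gives $|\partial_r\mathcal{T}[\phi]|\lesssim r^{-2(n-1)}$, i.e.\ $r^{2n-1}|\partial_r\mathcal{T}[\phi]|\lesssim r$. The extra power has to come from cancellation, namely the sign change of $\partial_rG$ across $s=r$.

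To close the gap, note that only the following piece decays as slowly as $r^{-2(n-1)}$:
\[
g_\phi(r):=S(r)+\frac{u_-^2}{2r^{2(n-1)}}\Bigl(\frac{(\phi(1)+\rho_+)^2}{(\phi(r)+\rho_+)^2}-1\Bigr)=\frac{u_-^2(\phi(1)+\rho_+)^2}{2r^{2(n-1)}(\phi(r)+\rho_+)^2}.
\]
This piece is $C^1$, its derivative involves only $\phi$ and $\phi_r$, and $|g_\phi'|\le Cu_-^2r^{-(2n-1)}$.

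The other three terms of $N(\phi)$ are $O(r^{-(3n-2)})$, $O(r^{-4(n-1)})$ and $O(r^{-(5n-4)})$. Hence they are bounded by $C(\mu|u_-|+R)R\,r^{-(2n-1)}$. For them, the crude $\partial_rG$ bound together with your convolution estimate at $\beta=2n-1$ suffices, and no $\phi_{rr}$ ever appears.

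For $g_\phi$, let $c_0:=s^{n-1}W(s)=-\alpha^{2-n}K_{\nu+1}(\alpha)$, which is a constant. Integrate by parts in $s$ using $(s^{n-1}\phi_\pm')'=\alpha^2s^{n-1}\phi_\pm$ and $\phi_+'(1)=0$. The boundary terms cancel exactly, leaving
\[
\partial_r\int_1^\infty G(r,s)g(s)s^{n-1}\,ds=-\frac{1}{\alpha^2c_0}\Bigl(\phi_-'(r)\int_1^r s^{n-1}\phi_+'(s)g'(s)\,ds+\phi_+'(r)\int_r^\infty s^{n-1}\phi_-'(s)g'(s)\,ds\Bigr).
\]
Combined with $|\phi_\pm'(s)|\le Cs^{-(n-1)/2}e^{\pm\alpha s}$, this gives the bound $Cu_-^2r^{-(2n-1)}$.

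Differences are handled the same way, since $|(g_\phi-g_\psi)'|\le Cu_-^2r^{-(2n-1)}\|\phi-\psi\|_Y$. With this step supplied, your choice $R=2(C_1|\rho_b|+C_2u_-^2)$ and the contraction constant $C(\mu|u_-|+R+u_-^2)$ go through as you wrote them.
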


\begin{proof}
	First, for $\phi\in B_R$ with $0<R<\frac{\rho_+}{2}$, we use Lemma \ref{lem:Green} and Lemma \ref{lem:Nphi_inflow} to get
	\begin{align*}
		r^{2(n-1)}|\mathcal{T}[\phi](r)|&\le r^{2(n-1)}|\phi_b(r)| + \frac{Cr^{2(n-1)}}{\kappa}\int_1^{\infty}\frac{e^{-\alpha|r-s|}\left(|S(s)|+|N(\phi(s))|\right)}{r^{(n-1)/2}s^{(n-1)/2}}s^{n-1}\,ds\\
		&\le C|\rho_b| r^{2(n-1)}r^{-\nu}K_{\nu}(\alpha r) \\
		&\quad + \frac{Cr^{2(n-1)}}{\kappa r^{(n-1)/2}}\int_1^\infty e^{-\alpha|r-s|}\left(\frac{|u_-|^2}{2s^{2(n-1)}}+|N(\phi(s))|\right)s^{(n-1)/2}\,ds\\
		&\le C_1|\rho_b| +\frac{Cr^{3(n-1)/2}|u_-|^2}{\kappa}\int_1^\infty e^{-\alpha|r-s|}s^{-3(n-1)/2}\,ds \\
		&\quad +\frac{CC_0(\mu |u_-|+R+|u_-|^2)Rr^{3(n-1)/2}}{\kappa}\int_1^\infty e^{-\alpha|r-s|}s^{-3(n-1)/2}\,ds.
	\end{align*}
On the other hand, for any $k>0$ and $r\ge 1$, there exists a positive constant $C$ that only depend on $\alpha$ and $k$ such that 
\begin{align*}
	\int_1^\infty e^{-\alpha|r-s|}s^{-k}\,ds&=\int_1^{\max\{1,r/2\}} e^{-\alpha(r-s)}s^{-k}\,ds +\int_{\max\{1,r/2\}}^{2r}e^{-\alpha|r-s|}s^{-k}\,ds+ \int_{2r}^\infty e^{-\alpha(s-r)}s^{-k}\,ds\\
	&\le e^{-\alpha r/2}\int_{1}^{\max\{1,r/2\}}1\,ds+ (2^k+1)r^{-k}\int_{\R}e^{-\alpha|s|}\,ds+(2r)^{-k}\int_{2r}^\infty e^{-\alpha s/2}\,ds\\
	&\le e^{-\alpha r/2} + Cr^{-k}+Cr^{-k}e^{-\alpha r}\le Cr^{-k},
\end{align*}
where we use 
\[s^{-k}\le \max\{1,r/2\}^{-k}\le (2^k+1)r^{-k}\quad\mbox{for}\quad s\ge \min\{1,r/2\}.\]
Thus, we conclude that, there exists a positive constants $C_1,C_2$ and $C_3$ that are independent of $R$, $\mu$, and $u_-$ that
\[r^{2(n-1)}|\mathcal{T}[\phi](r)|\le C_1|\rho_b| + C_2|u_-|^2+C_3(\mu |u_-|+R+|u_-|^2)R.\]
Furthermore, taking derivative $\mathcal{T}[\phi](r)$ with respect to $r$, we get
\begin{align*}
	r^{2n-1}\pa_r(\mathcal{T}[\phi])(r)= r^{2n-1}\phi'_b(r)+\frac{1}{\kappa}\int_1^\infty \pa_rG(r,s)\left(S(s)+N(\phi(s))\right)s^{n-1}\,ds.
\end{align*}
Using the pointwise estimate of the derivative of the Green function in Lemma \ref{lem:Green}, we still have the same bound
\begin{align*}
	r^{2n-1}|\pa_r(\mathcal{T}[\phi])(r)|\le C_1|\rho_b| + C_2|u_-|^2+C_3(\mu |u_-|+R+|u_-|^2)R,
\end{align*}
where we may choose larger constants $C_1$, $C_2$, and $C_3$ if needed. Taking supremum over $r\ge 1$, we get
\[\|\mathcal{T}[\phi]\|_Y\le C_1|\rho_b| + C_2|u_-|^2+C_3\left(\mu |u_-|+R+|u_-|^2\right)R.\]
Thus, if $|\rho_b|$ and $|u_-|$ are sufficiently small so that
\begin{equation}\label{smallness_inflow1}
	C_1|\rho_b|+C_2|u_-|^2<\frac{\rho_+}{4},\quad \mu |u_-|+2\left(C_1|\rho_b|+C_2|u_-|^2\right)+|u_-|^2< \frac{1}{2C_3},
\end{equation}
we choose $R=2\left(C_1|\rho_b|+C_2|u_-|^2\right)<\frac{\rho_+}{2}$ to derive

\begin{align*}
	\|\mathcal{T}[\phi]\|_Y&\le \frac{R}{2}+C_3\left(\mu |u_-|+2\left(C_1|\rho_b|+C_2|u_-|^2\right)+|u_-|^2\right)R<\frac{R}{2}+\frac{R}{2}=R.
\end{align*}
Thus, $\mathcal{T}$ is a map from $B_R$ to $B_R$ itself for such $R$. To show the contraction estimate, let $\phi,\psi\in B_R$. Then, using Lemma \ref{lem:Nphi_inflow},

\begin{align*}
	|\mathcal{T}[\phi](r)-\mathcal{T}[\psi](r)|&\le \frac{1}{\kappa}\int_1^\infty G(r,s)|N(\phi)(s)-N(\psi(s))|s^{n-1}\,ds\\
	&\le \frac{CC_0(\mu |u_-|+R+|u_-|^2)\|\phi-\psi\|_Y}{\kappa r^{(n-1)/2}}\int_1^\infty e^{-\alpha|r-s|}s^{-3(n-1)/2}\,ds\\
	&\le \frac{C_4(\mu |u_-|+R+|u_-|^2)\|\phi-\psi\|_Y}{ r^{2(n-1)}},
\end{align*}
where $C_4$ is the constant that are independent of $\mu,R$, and $u_-$. Thus,
\[r^{2(n-1)}|\mathcal{T}[\phi](r)-\mathcal{T}[\psi](r)|\le C_4(\mu |u_-|+R+|u_-|^2)\|\phi-\psi\|_Y.\]
Similarly, we have
\[r^{2n-1}|\pa_r(\mathcal{T}[\phi])(r)-\pa_r(\mathcal{T}[\psi])(r)|\le C_4(\mu |u_-|+R+|u_-|^2)\|\phi-\psi\|_Y.\]
Thus, after taking supremum over $r\ge1$,
\[\|\mathcal{T}[\phi]-\mathcal{T}[\psi]\|_Y\le C_4(\mu |u_-|+R+|u_-|^2)\|\phi-\psi\|_Y.\]
Therefore, if we choose $|\rho_b|$ and $|u_-|$ further small so that
\begin{equation}\label{smallness_inflow2}
	\mu |u_-|+2\left(C_1|\rho_b|+C_2|u_-|^2\right)+|u_-|^2<\frac{1}{2C_4},
\end{equation}
the map $\mathcal{T}$ is contraction satisfying $\|\mathcal{T}[\phi]-\mathcal{T}[\psi]\|_Y<\frac12 \|\phi-\psi\|_Y$ for any $\phi,\psi\in B_{2(C_1|\rho_b|+C_2|u_-|^2)}$. 
To sum up, in order to guarantee the smallness conditions \eqref{smallness_inflow1} and \eqref{smallness_inflow2}, we choose $\rho_b$ and $u_-$ sufficiently small so that 
\[|\rho_b| < \min\left\{\frac{\rho_+}{8C_1},\frac{1}{8C_1C_3},\frac{1}{8C_1C_4}\right\},\quad |u_-|<\min\left\{1, \frac{\rho_+}{8C_2},\frac{1}{4\left(\mu+1+2C_2\right)C_3},\frac{1}{4\left(\mu+1+2C_2\right)C_4}\right\}.\]
Then, the map $\mathcal{T}$ is a self-mapping contraction from $B_R$ to $B_R$ with $R=2(C_1|\rho_b|+C_2|u_-|^2)$.
\end{proof}

Since the solution to \eqref{eq:phi_inflow_outflow} subject to the boundary conditions \eqref{eq:phi_boundary_inflow_outflow} is a fixed point of the map $\mathcal{T}$, the existence of smooth solution to \eqref{eq:phi_inflow_outflow}--\eqref{eq:phi_boundary_inflow_outflow} is guaranteed for sufficiently small $|\rho_b|$ and $|u_-|$ by a contraction property of $\mathcal{T}$ in Lemma \ref{lem:T_contraction}. Moreover since our fixed point $\phi$ belongs to the $B_{R}$ with $R=2(C_1\rho_b+C_2|u_-|^2)$, we get the desired decay bound of $\phi$. This completes the proof of Proposition \ref{prop:inflow}.

\section{Vanishing capillarity limit for impermeable wall problem}\label{sec:vanishing}
\setcounter{equation}{0}

In this section, we consider the vanishing capillarity limit of the stationary solution to the impermeable wall problem \eqref{NSK_stationary_impermeable}--\eqref{boundary_impermeable}, and completes the proof of Theorem \ref{thm:vanishing_capillarity}. 

\subsection{Fixed capillarity boundary condition}
We first consider the case when the boundary condition is fixed as $\rho_b$. Let us recall the equation for the stationary solution to the impermeable wall problem \eqref{impermeable_2}:
\[\tilde{\rho}^\kappa_{rr}+\frac{n-1}{r}\tilde{\rho}^\kappa_r=\frac{1}{\kappa}(h(\tilde{\rho}^\kappa)-h(\rho_+)).\]
Here, we use the notation $\tilde{\rho}^\kappa=\tilde{\rho}$ to express the dependency of the solution on $\kappa$. We now introduce a new variable $y=\frac{r-1}{\sqrt{\kappa}}$. Then, the equation can be represented in terms of $y$ as
\begin{equation*}
	\tilde{\rho}^\kappa_{yy}+\frac{\sqrt{\kappa}(n-1)}{1+\sqrt{\kappa}y}\tilde{\rho}^\kappa_y = h(\tilde{\rho}^\kappa)-h(\rho_+),
\end{equation*}
which formally converges to
\begin{equation}\label{eq:limit}
	\bar{\rho}_{yy} = h(\bar{\rho})-h(\rho_+)
\end{equation}
as $\kappa\to0$. Furthermore, the boundary conditions in terms of $y$ becomes
\begin{equation}\label{eq:limit_boundary}
	\bar{\rho}_y(0) = \lim_{\kappa\to0}\sqrt{\kappa}\rho_b=0,\quad \lim_{y\to\infty}\bar{\rho}(y) = \rho_+.
\end{equation}
Therefore, the solution to \eqref{eq:limit}--\eqref{eq:limit_boundary} becomes the trivial solution $\bar{\rho}(y) \equiv \rho_+$ and we can expect that the stationary solution $\tilde{\rho}^\kappa(r)$ converges to the constant $\rho_+$. Observe that the constant solution $\rho_+$ is a stationary solution to the impermeable wall problem of the spherically symmetric Navier--Stokes equations, which is the limit of the NSK equations as $\kappa\to0$. \\

\noindent {\bf Proof of Theorem \ref{thm:vanishing_capillarity} (1)}:
We recall that the perturbation $\phi^\kappa(r)=\tilde{\rho}^\kappa(r)-\rho_+$ satisfies the following equation (see \eqref{eq:phi_impermeable}):
	\begin{equation}\label{eq:phi_impermeable_2}
		\kappa(r^{n-1}\phi^\kappa_r)_r = r^{n-1}(h(\phi^\kappa+\rho_+)-h(\rho_+)).
	\end{equation}
	We multiply \eqref{eq:phi_impermeable_2} by $\phi^\kappa$, integrating over $r\in [1,\infty)$, and then take integration by parts to get
	\begin{equation}\label{eq:phi^k}
		-\kappa\rho_b\phi^\kappa(1)-\kappa\int_1^\infty r^{n-1}(\phi^\kappa_r)^2\,dr=\int_1^\infty r^{n-1}(h(\phi^\kappa+\rho_+)-h(\rho_+))\phi^\kappa\,dr.
	\end{equation}
	Since $h(\rho)$ defined in \eqref{eq:h} is an increasing function, there exists a positive constant $C_0$ such that
	\[(h(\phi^\kappa+\rho_+)-h(\rho_+))\phi^\kappa \ge C_0 (\phi^\kappa)^2.\]
	Thus, we rearrange the terms in \eqref{eq:phi^k} to get
	\begin{equation}\label{est:vanishing_kappa_1}
		C_0\int_1^\infty r^{n-1}(\phi^\kappa)^2\,dr+\kappa\int_1^\infty r^{n-1}(\phi^\kappa_r)^2\,dr=-\kappa\rho_b\phi^\kappa(1)\le \kappa|\rho_b||\phi^\kappa(1)|.
	\end{equation}
	To control the right-hand side of \eqref{est:vanishing_kappa_1}, we use the fact that $r\ge1$ and the H\"older inequality to get
	\begin{align*}
		|\phi^\kappa(1)|^2&=-\int_1^\infty \frac{d}{dr}(\phi^\kappa)^2\,dr=-2\int_1^\infty \phi^\kappa\phi^\kappa_r\,dr\\
		&\le 2\left(\int_1^\infty r^{n-1}(\phi^\kappa)^2\,dr\right)^{\frac{1}{2}}\left(\int_1^\infty r^{n-1}(\phi^\kappa_r)^2\,dr\right)^{\frac{1}{2}}.
	\end{align*}
	Thus, we use Young's inequality to further estimate \eqref{est:vanishing_kappa_1} as
	\begin{align}
		\begin{aligned}\label{est:vanishing_kappa_2}
			C_0&\int_1^\infty r^{n-1}(\phi^\kappa)^2\,dr+\kappa\int_1^\infty r^{n-1}(\phi^\kappa_r)^2\,dr\\
			&\le \sqrt{2}\kappa|\rho_b|\left(\int_1^\infty r^{n-1}(\phi^\kappa)^2\,dr\right)^{\frac{1}{4}}\left(\int_1^\infty r^{n-1}(\phi^\kappa_r)^2\,dr\right)^{\frac{1}{4}}\\
			&\le \frac{C_0}{4}\int_1^\infty r^{n-1}(\phi^\kappa)^2\,dr+\frac{\kappa}{4}\int_1^\infty r^{n-1}(\phi^\kappa_r)^2\,dr + C|\rho_b|^2\kappa^{3/2}.
		\end{aligned}
	\end{align}
	Again, rearranging the terms in \eqref{est:vanishing_kappa_2}, we finally get
	\[C_0\int_1^\infty r^{n-1}(\phi^\kappa)^2\,dr+\kappa\int_1^\infty r^{n-1}(\phi^\kappa_r)^2\,dr\le C\kappa^{3/2},\]
	which implies the desired estimates for $L^2$-norms. Furthermore, using Sobolev interpolation inequality, we get
	\[\|\phi^\kappa\|^2_{L^\infty}\le C\|\phi^\kappa\|_{L^2_r(1,\infty)}\|\phi^\kappa_r\|_{L^2_r(1,\infty)}\le C\kappa^{3/4}\kappa^{1/4}\le C\kappa.\]
This completes the proof of Theorem \ref{thm:vanishing_capillarity} (1). \qed
\subsection{Singularly scaled capillarity boundary condition}
When the boundary condition $\rho_b$ is inversely proportional to $\sqrt{\kappa}$, say $\rho_b=\rho_{b}^0/\sqrt{\kappa}$ with fixed $\rho^0_b$, we can expect that the limit solution $\bar{\rho}$ becomes non-trivial, and it satisfies the following second-order boundary value problem:
\begin{equation}\label{eq:limit_singular}
	\bar{\rho}_{yy} = h(\bar{\rho})-h(\rho_+),\quad \bar{\rho}_y(0)=\rho_b^0,\quad\lim_{y\to\infty}\bar{\rho}(y)=\rho_+.
\end{equation}

Therefore, it is expect that the solution $\tilde{\rho}^\kappa(r)$ will converge to the limit $\bar{\rho}(y)$, which is the statement of Theorem \ref{thm:vanishing_capillarity} (2). Before we prove Theorem \ref{thm:vanishing_capillarity} (2), we first investigate the existence and properties of the solution to \eqref{eq:limit_singular}.

\begin{lemma}\label{lem:limit}
	Suppose $|\rho^0_b|$ is sufficiently small. Then, there exists a unique solution $\bar{\rho}(y)$ to \eqref{eq:limit_singular} satisfying
	\[|\bar{\rho}(y)-\rho_+|\le C|\rho^0_b|e^{-\sqrt{h'(\rho_+)}y},\quad |\bar{\rho}_y(y)|\le C|\rho^0_b|e^{-\sqrt{h'(\rho_+)}y},\]
	for some positive constant $C$.
\end{lemma}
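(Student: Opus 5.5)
Since \eqref{eq:limit_singular} is an autonomous second-order ODE on the half-line, I will use a phase-plane (first integral) argument rather than the contraction argument of Proposition \ref{prop:impermeable}. It also yields the sharp rate $e^{-\sqrt{h'(\rho_+)}y}$, which a fixed point in $X_\sigma$ would only give for $\sigma<\sqrt{h'(\rho_+)}$. Set $\beta:=\sqrt{h'(\rho_+)}$ and $\psi:=\bar\rho-\rho_+$.

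Multiplying $\psi_{yy}=h(\psi+\rho_+)-h(\rho_+)$ by $\psi_y$ and integrating from $y$ to $\infty$ gives the first integral
\[\tfrac12\psi_y^2=H(\psi),\qquad H(\psi):=\int_0^\psi\bigl(h(s+\rho_+)-h(\rho_+)\bigr)\,ds .\]
This requires $\psi\to0$ and $\psi_y\to0$ at infinity, which holds for any solution staying in the small-amplitude regime, since $\psi_y\to0$ follows from boundedness of $\psi_{yy}$ together with $\psi\to0$. Because $h$ is increasing, $H(\psi)=\tfrac{\beta^2}{2}\psi^2(1+O(\psi))$ and $H>0$ for $\psi\neq0$. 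A decaying orbit must lie on the stable manifold of the saddle $(0,0)$, that is $\psi_y=-\operatorname{sgn}(\psi)\sqrt{2H(\psi)}=:-g(\psi)$, where $g$ is $C^1$ near $0$ with $g(\psi)=\beta\psi(1+O(\psi))$.

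The boundary condition then reads $-g(\psi(0))=\rho_b^0$. Since $g'(0)=\beta>0$, the inverse function theorem gives, for $|\rho^0_b|$ small, a unique small $\psi_0=\psi(0)$ with $|\psi_0|\le C|\rho^0_b|$. The first-order problem $\psi_y=-g(\psi)$, $\psi(0)=\psi_0$, has a unique global solution. It is monotone and tends to $0$, because $0$ is an equilibrium that cannot be reached in finite time. Differentiating confirms it solves \eqref{eq:limit_singular}, which gives existence.

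For uniqueness, any solution yields the first integral above, so it lies on the branch $\psi_y=\pm\sqrt{2H(\psi)}$. Only the branch with $\psi\psi_y\le0$ is compatible with decay, and $\psi(0)$ is then forced by the boundary condition. The main obstacle is excluding large-amplitude solutions, which the smallness of $\rho^0_b$ does not immediately rule out. I would handle this by noting that the first integral forces $H(\psi(0))=\tfrac12(\rho_b^0)^2$. Since $H$ is strictly convex-like, increasing in $|\psi|$ on each side of $0$ and coercive for $\psi>-\rho_+$, the value $\psi(0)$ is small, and hence so is the whole monotone orbit. This gives uniqueness in the class of positive solutions.

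For the decay, write $\psi_y=-\beta\psi+O(\psi^2)$, so that $(\log|\psi|)_y=-\beta+O(|\psi|)$. Integrating gives $|\psi(y)|\le|\psi_0|e^{-\beta y}\exp\bigl(C\int_0^\infty|\psi|\bigr)$. A preliminary bound $|\psi|\le|\psi_0|e^{-\beta y/2}$, valid since $g(\psi)\ge\tfrac{\beta}{2}\psi$ for small $\psi$, makes $\int_0^\infty|\psi|\le C|\psi_0|$ finite. This yields $|\psi(y)|\le C|\rho^0_b|e^{-\beta y}$. Finally, $|\psi_y|=g(|\psi|)\le C|\psi|$ gives the same bound for the derivative.
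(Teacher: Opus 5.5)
Your proposal is correct and follows essentially the paper's route: the conserved energy $\tfrac12\psi_y^2-H(\psi)$ (the paper's $W$), restriction to the stable branch $\psi_y=-\operatorname{sgn}(\psi)\sqrt{2H(\psi)}$ of the saddle, the inverse function theorem to fix $\psi(0)$ from $\rho_b^0$, and decay read off from the reduced first-order ODE. Where you go beyond the paper you are more careful than it is: you get global rather than merely local uniqueness from the monotonicity of $H$ in $|\psi|$ (``coercive'' is not quite right near $\psi=-\rho_+$, where $H$ stays bounded, but you only need monotonicity), and you get the sharp rate $e^{-\sqrt{h'(\rho_+)}y}$ from the $O(\psi^2)$ remainder together with integrability of $\psi$, whereas the paper's $o(|\bar\rho-\rho_+|)$ remainder by itself only yields $e^{-(\sqrt{h'(\rho_+)}-\varepsilon)y}$.
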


\begin{proof}
	We reduce the second-order differential equation \eqref{eq:limit_singular} to the system of first-order differential equations by introducing $x_1 :=\bar{\rho}$ and $x_2:=\bar{\rho}_y$:
	\begin{align*}
		&\frac{d x_1}{dy} = x_2,\quad \frac{dx_2}{dy}=h(x_1)-h(\rho_+),\\
		&\lim_{y\to\infty}x_1(y) = \rho_+,\quad x_2(0)=\rho_b^0.
	\end{align*}
	It is easy to observe that the solution to the above system of ODEs preserves the energy $\frac{1}{2}x_2^2-W(x_1)$, where
	\[W(x):=\int_{\rho_+}^x (h(y)-h(\rho_+))\,dy.\]
	It is clear that $W'(x) = h(x)-h(\rho_+)$ and $W''(x) = h'(x) = \frac{P'(x)}{x}>0$. In particular, the eigenvalues of the Jacobian at the equilibrium $(\rho_+,0)$ are $\pm\sqrt{h'(\rho_+)}$ and therefore, the equilibrium $(\rho_+,0)$ is a saddle point. Hence, in order to find the solution to \eqref{eq:limit_singular}, we need to verify that there exists an initial value $x_1(0)$ such that the point $(x_1(0),x_2(0))$ is on the stable manifold, $x_2 = -\textup{sgn}(x_1-\rho_+) \sqrt{2W(x_1)}=:S(x_1)$. Here, the sign is uniquely determined since $x_2(0)=\rho^0_b$ and $x_1-\rho_+$ has opposite sign. However, as $W(\rho_+)=W'(\rho_+)=0$, we have $W(x_1) = \frac{W''(\rho_+)}{2}|x_1-\rho_+|^2+O(|x_1-\rho_+|^3)$ where $x_1$ is close to $\rho_+$. Thus,
	\[S'(\rho_+) = \lim_{x_1\to\rho_+}\frac{S(x_1)-S(\rho_+)}{x_1-\rho_+}=\lim_{x_1\to\rho_+}\frac{-\textup{sgn}(x_1-\rho_+)\sqrt{2W(x_1)}}{x_1-\rho_+}=-\sqrt{W''(\rho_+)}\neq0.\]
	Thus, by inverse function theorem, for sufficiently small $|\rho_b^0|$, there exists a unique $\rho_-$ such that the point $(\rho_-,\rho_b^0)$ is on the stable manifold, and therefore, there exists a unique solution to \eqref{eq:limit_singular} such that $\bar{\rho}(0)=\rho_-$, $\bar{\rho}_y(0)=\rho_b^0$ and $\lim_{y\to\infty} \bar{\rho}(y)=\rho_+$. 
	Now, we derive the decay rate of $\bar{\rho}$. For simplicity, we only consider the case when $\rho_b^0<0$, and therefore, $\bar{\rho}-\rho_+>0$. Since the trajectory is on the stable manifold, $\bar{\rho}$ should satisfies the following reduced ODE:
	\[\bar{\rho}_y = -\sqrt{2W(\bar{\rho})}.\]
	However, we already observed that $W(\bar{\rho})=\frac{W''(\rho_+)}{2}|\bar{\rho}-\rho_+|^2+O(|\bar{\rho}-\rho_+|^3)$ as $y\to\infty$, we have
	\[\bar{\rho}_y =-\sqrt{W''(\rho_+)}(\bar{\rho}-\rho_+)+o(|\bar{\rho}-\rho_+|).\]
	This gives an exponential decay rate for $\bar{\rho}$:
	\[|\bar{\rho}(y)-\rho_+| \le C|\bar{\rho}(0)-\rho_+|e^{-\sqrt{W''(\rho_+)}y}\le C|\rho_b^0|e^{-\sqrt{h'(\rho_+)}y}.\]
\end{proof}

\noindent {\bf Proof of Theorem \ref{thm:vanishing_capillarity} (2)}:
Again, we recall that the stationary solution $\tilde{\rho}^\kappa$ satisfies
	\[\kappa(r^{n-1}\tilde{\rho}^\kappa_r)_r=r^{n-1}(h(\tilde{\rho}^\kappa)-h(\rho_+)).\]
Furthermore, $\bar{\rho}^\kappa(r):=\bar{\rho}(\frac{r-1}{\sqrt{\kappa}})$ satisfies
	\begin{align*}
		\kappa(r^{n-1}\bar{\rho}^\kappa_r)_r&=\kappa r^{n-1}\bar{\rho}^\kappa_{rr}+\kappa (n-1)r^{n-2}\bar{\rho}^\kappa_r=r^{n-1}\bar{\rho}_{yy}+\sqrt{\kappa}\frac{n-1}{r}r^{n-1}\bar{\rho}_y\\
		&=r^{n-1}(h(\bar{\rho})-h(\rho_+))+\sqrt{\kappa}\frac{n-1}{r}r^{n-1}\bar{\rho}_y\left(\frac{r-1}{\sqrt{\kappa}}\right).
	\end{align*}
	Therefore, taking difference between two equations, we get the following equation for the perturbation $\phi^\kappa:=\tilde{\rho}^\kappa-\bar{\rho}^\kappa$:
	\[\kappa(r^{n-1}\phi^\kappa_r)_r = r^{n-1}(h(\phi^\kappa+\bar{\rho}^\kappa)-h(\bar{\rho}^\kappa))+\sqrt{\kappa}\frac{n-1}{r}r^{n-1}\bar{\rho}_y\left(\frac{r-1}{\sqrt{\kappa}}\right).\]
	As before, we multiply $\phi^\kappa$ on the both sides, and take integration by parts and the boundary conditions
	\[\tilde{\rho}^\kappa_r(1) = \frac{\rho_b^0}{\sqrt{\kappa}},\quad \bar{\rho}^\kappa_r(1) = \frac{1}{\sqrt{\kappa}}\bar{\rho}_y(0)=\frac{\rho_b^0}{\sqrt{\kappa}},\quad\mbox{hence}\quad \phi^\kappa_r(1)=0\]
	to obtain
	\[-\kappa\int_1^\infty (\phi^\kappa_r)^2 r^{n-1}\,dr=\int_{1}^\infty (h(\phi^\kappa+\bar{\rho}^\kappa)-h(\bar{\rho}^\kappa))\phi^\kappa r^{n-1}\,dr+\sqrt{\kappa}\int_1^\infty \phi^\kappa \frac{n-1}{r}r^{n-1}\bar{\rho}_y\,dr.\]
	Again, we use the monotonicity of $h$ to guarantee that there exists a positive constant $C_0$ that is independent of $\kappa$ such that $(h(\phi^\kappa+\bar{\rho}^\kappa)-h(\bar{\rho}^\kappa))\phi^\kappa\ge C_0(\phi^\kappa)^2$, which implies
	\begin{align*}
		\kappa\int_1^\infty& (\phi^\kappa_r)^2 r^{n-1}\,dr + C_0\int_1^\infty (\phi^\kappa)^2 r^{n-1}\,dr\\
		&\le \sqrt{\kappa}\int_1^\infty |\phi^\kappa|\frac{n-1}{r}r^{n-1}|\bar{\rho}_y|\,dr\le \frac{C_0}{2}\int_1^\infty (\phi^\kappa)^2r^{n-1}\,dr+C\kappa\int_1^{\infty}\frac{(n-1)^2}{r^2}|\bar{\rho}_y|^2r^{n-1}\,dr.
	\end{align*}
	This yields
	\[\kappa\int_1^\infty (\phi^\kappa_r)^2r^{n-1}\,dr +\frac{C_0}{2}\int_1^\infty (\phi^\kappa)^2r^{n-1}\,dr\le C\kappa^{3/2}\int_0^\infty |\bar{\rho}_y(y)|^2(1+\sqrt{\kappa}y)^{n-3}\,dy\le C\kappa^{3/2},\]
	where we used the decay estimate for $\bar{\rho}_y$ in Lemma \ref{lem:limit} in the last inequality. Hence, we conclude the desired convergence rate:
	\[\|\phi^\kappa\|_{L^2_r(1,\infty)}\le C\kappa^{3/4},\quad \|\phi^\kappa_r\|_{L^2_r(1,\infty)}\le C\kappa^{1/4},\quad \|\phi^\kappa\|_{L^\infty(1,\infty)}\le C\kappa^{1/2}.\]
This completes the proof of Theorem \ref{thm:vanishing_capillarity} (2).
\qed

\section{Numerical experiments}\label{sec:numeric}
\setcounter{equation}{0}
In this section, we present the numerical solutions to the impermeable wall problems \eqref{NSK_stationary_impermeable}--\eqref{boundary_impermeable} for both fixed and singularly scaled boundary condition $\rho_b$, validating the convergence rates in Theorem \ref{thm:vanishing_capillarity}. 

\subsection{Fixed boundary condition}
For the numerical simulation of the \eqref{NSK_stationary_impermeable}--\eqref{boundary_impermeable} with the fixed boundary data $\rho_b$, we choose the parameters as
\[n=3,\quad \rho_+=1,\quad \rho_b = -1,\quad \gamma=1.\]
As in Theorem \ref{thm:vanishing_capillarity}, the solution $\tilde{\rho}$ converges to the constant state $\rho_+$ as $\kappa\to0$. Therefore, we measure both the $L^2$-norm $\|\tilde{\rho}^\kappa-\rho_+\|_{L^2_r}$ and $L^\infty$-norm $\|\tilde{\rho}^\kappa-\rho_+\|_{L^\infty}$ of the errors and estimates their convergence rates. Figure \ref{fig:convergence-1} shows the result of the numerical simulation. The left figure shows the convergence rates for both $L^2$ and $L^\infty$ errors coincide with the theoretical results $O(\kappa^{3/4})$ and $O(\kappa^{1/2})$ as $\kappa\to0$. This shows that the convergence rate obtained in Theorem \ref{thm:vanishing_capillarity} is indeed optimal convergence rate. The right figure shows that the profiles of the stationary solutions $\tilde{\rho}^\kappa$ converges to the constant $\rho_+$ as $\kappa\to0$. 

\begin{figure}[h!]
	\includegraphics[width=1\textwidth]{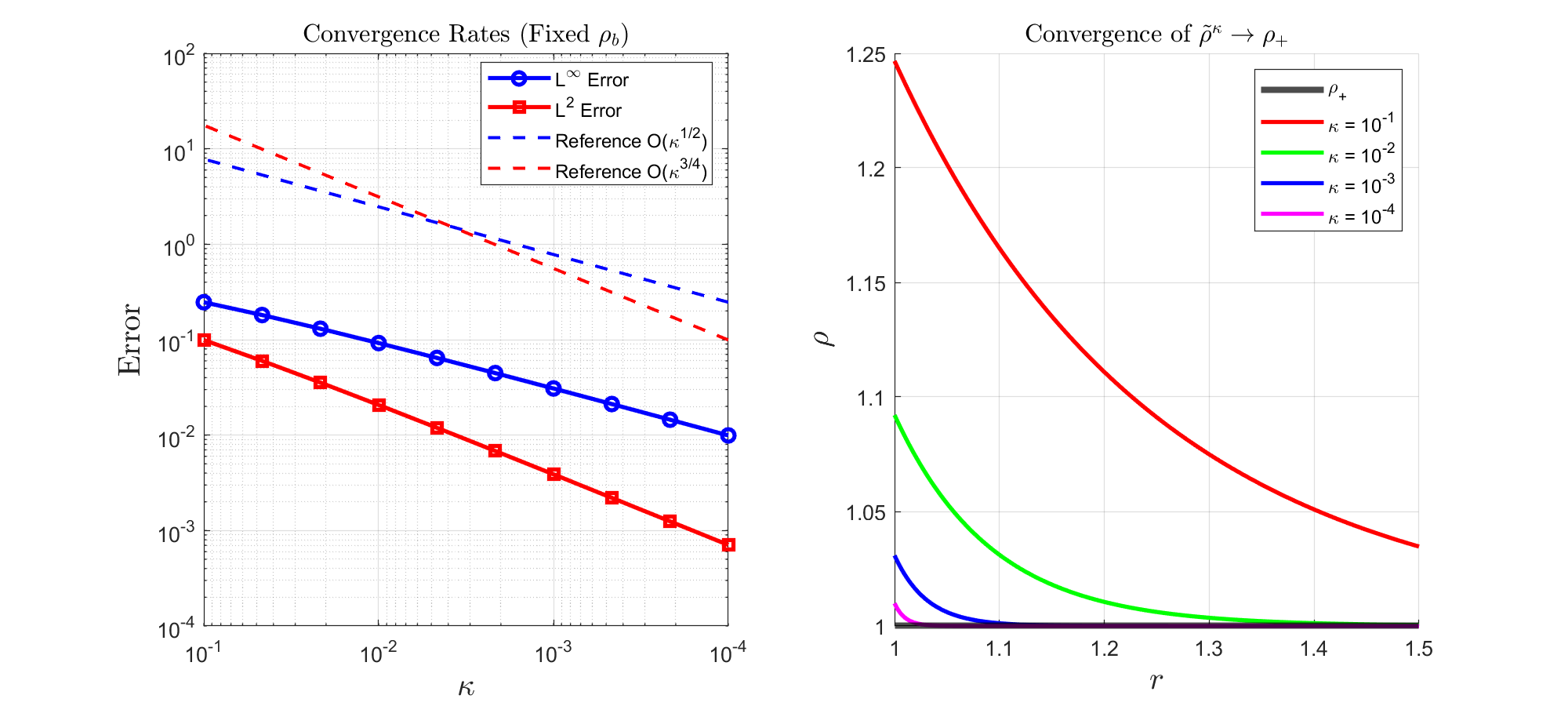}
	\caption{(Left) Convergence rates of $\|\tilde{\rho}^\kappa-\rho_+\|_{L^2}$ and $\|\tilde{\rho}^\kappa-\rho_+\|_{L^\infty}$ for the fixed boundary condition. The numerical convergence rates coincide with the theoretical ones. (Right) Convergence of the profile $\tilde{\rho}^\kappa$ towards the constant $\rho_+$ for the fixed boundary condition.}
	\label{fig:convergence-1}
\end{figure}

\subsection{Singularly scaled boundary condition}
Now, we consider the case when the boundary condition is singularly scaled as $\rho_b = \rho_b^0/\sqrt{\kappa}$. In this case, we choose $\rho_b^0=-0.1$, and the other parameters remain the same as the fixed boundary condition case. Figure \ref{fig:convergence_2-1} shows the desired convergence rates of $\|\tilde{\rho}^\kappa-\bar{\rho}^\kappa\|_{L^2_r}$ and $\|\tilde{\rho}-\bar{\rho}^\kappa\|_{L^\infty}$, which are $O(\kappa^{3/4})$ and $O(\kappa^{1/2})$ as $\kappa\to0$ respectively. Again, this shows that our theoretical convergence rate is optimal. Finally, we depict the profile of the stationary solutions $\tilde{\rho}^\kappa$ for various values of $\kappa$, both for the physical domain $r\in(1,\infty)$ and the rescaled domain $y\in(0,\infty)$. As the result of Theorem \ref{thm:vanishing_capillarity} and Remark \ref{rem:y} imply, the rescaled stationary solution $\tilde{\rho}^\kappa(1+\sqrt{\kappa}y)$ converges to $\bar{\rho}(y)$ as $\kappa\to0$. This convergence can be observed in the right figure of Figure \ref{fig:convergence_2-2}.

\begin{figure}[h!]
	\includegraphics[width=0.8\textwidth]{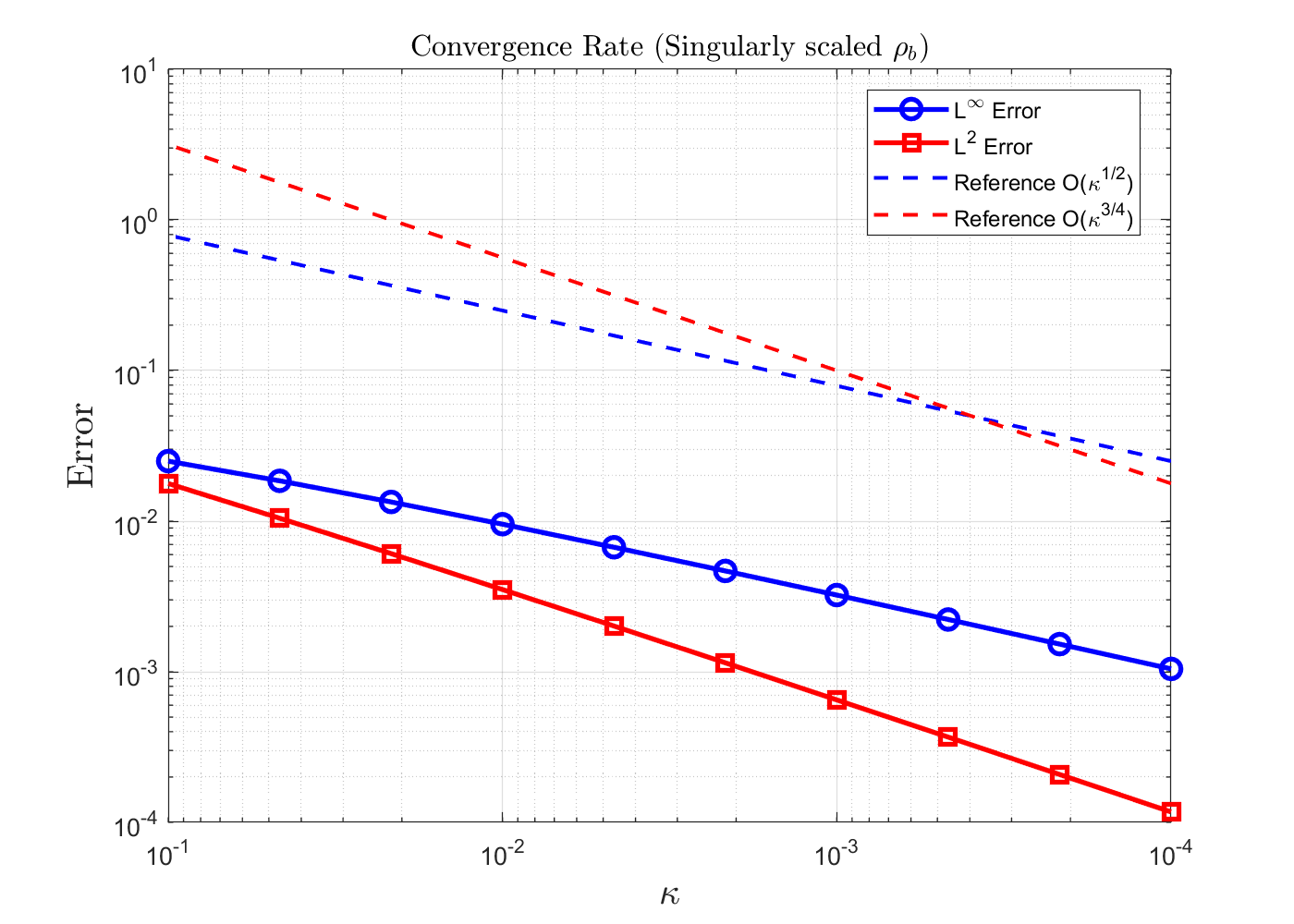}
	\caption{Convergence rates of $\|\tilde{\rho}^\kappa-\bar{\rho}^\kappa\|_{L^2}$ and $\|\tilde{\rho}^\kappa-\bar{\rho}^\kappa\|_{L^\infty}$ for the singularly scaled boundary condition. The numerical convergence rates coincide with the theoretical ones.}
	\label{fig:convergence_2-1}
\end{figure}
\begin{figure}[h!]
	\includegraphics[width=1\textwidth]{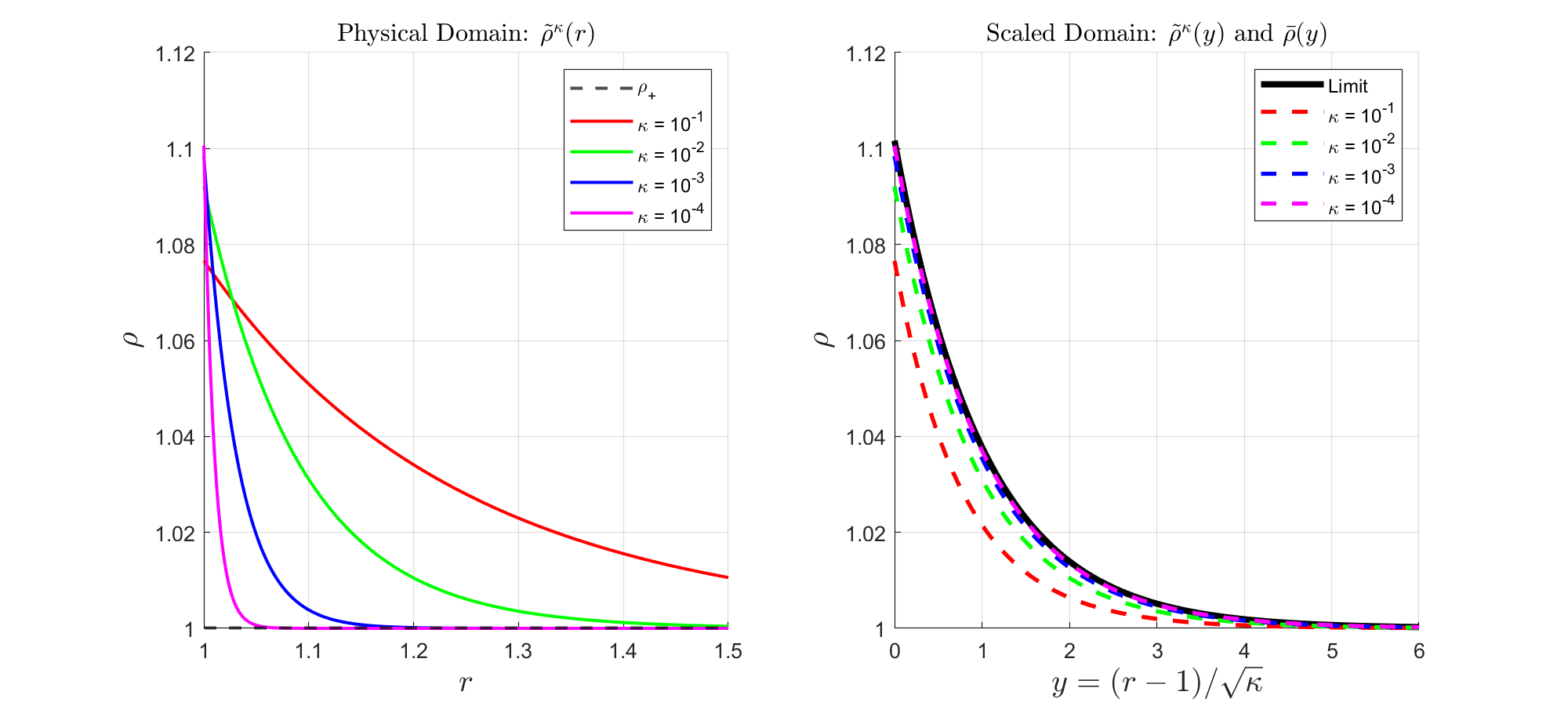}
	\caption{(Left) Profiles of the stationary solutions $\tilde{\rho}^\kappa$ with respect to the physical domain $r\in(1,\infty)$. (Right) Profiles of the stationary solutions $\tilde{\rho}^\kappa$ with respect to the scaled domain $y\in (0,\infty)$. The solution profile $\tilde{\rho}^\kappa(1+\sqrt{\kappa}y)$ converges to the limit $\bar{\rho}(y)$ as $\kappa\to0$.}
	\label{fig:convergence_2-2}
\end{figure}

\appendix

\section{Spherical Helmholtz equation and the properties of the modified Bessel functions}\label{sec:app}
\setcounter{equation}{0}

In this appendix, we present the details on how the general solution to the spherically symmetric Helmholtz equation
\begin{equation}\label{spherical_Helmholtz}
	\phi_{rr}+\frac{n-1}{r}\phi_r-\alpha^2\phi=0
\end{equation}
can be expressed as a linear combinations of the functions in \eqref{basis_solutions}. First of all, we introduce $\psi(r) := r^{\nu}\phi(r)$, where $\nu = \frac{n-2}{2}$. After substituting $\phi(r) = r^{-\nu}\psi(r)$, we get the equation for $\psi$ as
\[r^2\psi_{rr}+r\psi_r -\left(\alpha^2r^2+\nu^2\right)\psi=0.\]
Now, using the change of variable $s=\alpha r$, $\psi$ satisfies
\[s^2\psi_{ss}+s\psi_s-(s^2+\nu^2)\psi=0,\]
which is nothing but the modified Bessel equations, whose general solution is given by the linear combinations of the modified Bessel functions of first kind $I_\nu(s)$ and second kind $K_\nu(s)$. Thus, considering the change of variable, the general solution to \eqref{spherical_Helmholtz} is represented as a linear combination of
$(\alpha r)^{-\nu}I_\nu(\alpha r)$ and $(\alpha r)^{-\nu}K_{\nu}(\alpha r)$.

It is known that the derivatives of the modified Bessel functions satisfy

\begin{equation}\label{property_Bessel_1}
	\frac{d}{dz}(z^{-\nu}I_\nu(z))=z^{-\nu}I_{\nu+1}(z),\quad \frac{d}{dz}(z^{-\nu} K_{\nu}(z)) = -z^{-\nu}K_{\nu+1}(z).
\end{equation}
Furthermore, the modified Bessel functions has an asymptotic properties
\[I_{\nu}(z)\sim \frac{e^z}{\sqrt{2\pi z}},\quad K_\nu(z)\sim \sqrt{\frac{\pi}{2 z}}e^{-z}\quad\mbox{as}\quad z\to\infty\]
for any $\nu>-1$. This implies, for any fixed $\nu$ and $z>z_0$, there exists a constant $C$, only depending on $\nu$, such that
\begin{equation}\label{property_Bessel_2}
	\frac{C^{-1}e^{x}}{\sqrt{x}}\le I_\nu(x)\le \frac{Ce^{x}}{\sqrt{x}},\quad \frac{C^{-1}e^{-x}}{\sqrt{x}}\le K_{\nu}(x)\le \frac{Ce^{-x}}{\sqrt{x}}.
\end{equation}
This gives the desired properties of $I_\nu$ and $K_\nu$. We now explain how to derive the formula \eqref{expression_phi} for solving the equation
\[\phi'' +\frac{n-1}{r}\phi'-\alpha^2 \phi = F(r),\quad '=\frac{d}{dr},\]
subject to the boundary conditions
\[\phi'(1) = 0,\quad \lim_{r\to\infty}\phi(r)=0.\]
We find the solution $\phi$ by using the method of variation of parameters as follow. Assume the following ansatz of $\phi$
\[\phi(r) = u(r)\phi_+(r)+v(r)\phi_-(r),\]
under the assumption
\begin{equation}\label{uv1}
	u'(r)\phi_+(r)+v'(r)\phi_-(r)=0.
\end{equation}
After taking derivative on $\phi$, we have
\[\phi'(r)=u(r)\phi'_+(r)+v(r)\phi'_-(r),\quad \phi''(r) = u'(r)\phi'_+(r)+v'(r)\phi'_-(r)+u(r)\phi''_+(r)+v(r)\phi''_-(r),\]
and therefore,
\begin{equation}\label{uv2}
	\phi''+\frac{n-1}{r}\phi'-\alpha^2\phi=u'(r)\phi_+'(r)+v'(r)\phi'_-(r)=F(r).
\end{equation}
Thus, combining \eqref{uv1} and \eqref{uv2}, we conclude that $u$ and $v$ satisfy
\[\begin{pmatrix}
	\phi_+ & \phi_-\\ \phi_+' & \phi'_-
\end{pmatrix}\begin{pmatrix}
u'\\v'
\end{pmatrix}=\begin{pmatrix}
0\\F
\end{pmatrix},\]
or in other words,
\[u'(r) = -\frac{\phi_-(r)F(r)}{W(r)},\quad v'(r) = \frac{\phi_+(r)F(r)}{W(r)}.\]
After integration, $\phi$ can be represented as

\[\phi(r)=u(r)\phi_+(r)+v(r)\phi_-(r)=\phi_+(r)\int_{a}^r -\frac{\phi_-(s)F(s)}{W(s)}\,ds+\phi_-(r)\int_{b}^r \frac{\phi_+(s)F(s)}{W(s)}\,ds, \]
for some constants $a$ and $b$. Now, we determine the constants $a$ and $b$ to match the boundary condition. First, we need $\phi'(1)=0$, which implies
\[u(1)\phi_+'(1)+v(1)\phi'_-(1)=0,\]
and from the condition $\phi_+'(1)=0$, we get $v(1)=0$. Hence, we have $b=1$. Next, in order to satisfies the condition $\lim_{r\to\infty}\phi(r)=0$, we need the coefficient of $\phi_+(r)$ should be 0 as $r\to+\infty$. This yields $a=+\infty$. In conclusion, the solution should be
\begin{align*}
	\phi(r)&=\phi_+(r)\int_r^\infty \frac{\phi_-(s)F(s)}{W(s)}\,ds+\phi_-(r)\int_1^r \frac{\phi_+(s)F(s)}{W(s)}\,ds\\
	&=\int_1^\infty G(r,s)F(s)s^{n-1}\,ds,
\end{align*}
which is the desired formula.

\end{document}